\newtheorem{theorem}{Theorem}
\theoremstyle{plain}
\newtheorem{acknowledgement}{Acknowledgements}
\newtheorem{lemma}{Lemma}
\newtheorem{proposition}{Proposition}
\newtheorem{remark}{Remark}
\numberwithin{equation}{section}
\begin{document}
\title[Quasilinear singular elliptic systems]{Positive solutions for a class
of quasilinear singular elliptic systems}
\subjclass[2010]{35J75; 35J48; 35J92}
\keywords{Singular system; $p$-Laplacian; Leray-Schauder degree; regularity}

\begin{abstract}
In this paper we establish the existence of two positive solutions for a
class of quasilinear singular elliptic systems. The main tools are sub and
supersolution method and Leray-Schauder Topological degree.
\end{abstract}

\author{Claudianor O. Alves}
\address{Claudianor O. Alves\\
Universidade Federal de Campina Grande, Unidade Acad\^{e}mica de Matem\'{a}%
tica, CEP:58429-900, Campina Grande - PB, Brazil.}
\email{coalves@mat.ufcg.edu.br}
\thanks{C.O. Alves was partially supported by CNPq/Brazil 304036/2013-7 and
INCT-MAT}
\author{Abdelkrim Moussaoui}
\address{Abdelkrim Moussaoui\\
Biology Department, A. Mira Bejaia University, Targa Ouzemour, 06000 Bejaia,
Algeria.}
\email{abdelkrim.moussaoui@univ-bejaia.dz}
\thanks{A. Moussaoui was supported by CNPq/Brazil 402792/2015-7.}
\maketitle

\section{Introduction}

\label{S1}

We consider the following system of quasilinear elliptic equations:%
\begin{equation}
\left\{ 
\begin{array}{ll}
-\Delta _{p}u=u^{\alpha _{1}}v^{\beta _{1}} & \text{in }\Omega , \\ 
-\Delta _{q}v=u^{\alpha _{2}}v^{\beta _{2}} & \text{in }\Omega , \\ 
u,v>0 & \text{in }\Omega , \\ 
u,v=0 & \text{on }\partial \Omega ,%
\end{array}%
\right.  \tag{$P$}  \label{p}
\end{equation}%
where $\Omega $ is a bounded domain in $%
\mathbb{R}
^{N}$ $\left( N\geq 2\right) $ with $C^{1,\alpha }$ boundary $\partial
\Omega $, $\alpha \in (0,1)$, $\Delta _{p}$ and $\Delta _{q},$ $1<p,q<N,$
are the $p$-Laplacian and $q$-Laplacian operators, respectively, that is, $%
\Delta _{p}u=div\left( \left\vert \nabla u\right\vert ^{p-2}\nabla u\right) $
and $\Delta _{q}v=div\left( \left\vert \nabla v\right\vert ^{q-2}\nabla
v\right) .$ We consider the system (\ref{p}) in a singular case by assuming
that%
\begin{equation}
\left\{ 
\begin{array}{c}
-1<\alpha _{1}<0<\beta _{1}<\min \{p-1,\frac{q^{\ast }}{p^{\ast }}%
(p-1-\alpha _{1})\} \\ 
-1<\beta _{2}<0<\alpha _{2}<\min \{q-1,\frac{p^{\ast }}{q^{\ast }}(q-1-\beta
_{2})\}.%
\end{array}%
\right.  \label{h1}
\end{equation}%
In this case, system (\ref{p}) is cooperative, that is, for $u$ (resp. $v$)
fixed the right term in the first (resp. second) equation of (\ref{p}) is
increasing in $v$ (resp. $u$).

The study of singular elliptic problems is greatly justified because they
arise in several physical situations such as fluid mechanics pseudoplastics
flow, chemical heterogeneous catalysts, non-Newtonian fluids, biological
pattern formation and so on. In Fulks \& Maybee \cite{FM}, the reader can
find a very nice physical illustration of a practical problem which leads to
singular problem.

With respect to singular system it is worth to cite, among others, the
important Gierer-Meinhardt system which is the stationary counterpart of a
parabolic system proposed by Gierer-Meinhardt (see \cite{GM1, D}) which
occurs in the study of morphogenesis on experiments on hydra, an animal of a
few millimeters in length.

Besides the importance of the physical application above mentioned, we would
like to mention that from a mathematical point of view the singular problems
are also interesting because to solve some of them are necessary nontrivial
mathematical techniques, which involve Topological degree, Bifurcation
theory, Fixed point theorems, sub and supersolution Method, Pseudomonotone
Operator theory and Variational Methods. Here, it is impossible to cite all
papers in the literature which use the above techniques, however the reader
can find the applications of the above mentioned methods in Alves \&
Moussaoui \cite{CM}, Hai \cite{Hai}, Ghergu \& Radulescu \cite{GR},
Giacomoni, Hernandez \& Moussaoui \cite{GHM}, Giacomoni, Hernandez \& Sauvy 
\cite{GHS}, Hernandez, Mancebo \& Vega, \cite{HMV}, Khodja \& Moussaoui \cite%
{KM}, Zhang \cite{Z1}, Zhang \& Yu \cite{ZY}, Diaz, Morel \& Oswald \cite%
{DMO}, Alves, Corr\^{e}a \& Gon\c{c}alves \cite{ACG}, Crandall \& Rabinowitz 
\cite{CR}, Taliaferro \cite{T}, Lunning \& Perry \cite{LP}, Motreanu \&
Moussaoui \cite{MM2, MM3, MM}, Moussaoui, Khodja \& Tas \cite{MKT}, Agarwall
and O'Regan \cite{AO}, Stuart \cite{ST} and their references.

After a review bibliography, we did not find any paper where the existence
of multiple solutions have been considered for a singular system. Motivated
by this fact, we prove in the present paper the existence of at least two
positive solutions for system $(P)$. Our main result has the following
statement:

\begin{theorem}
\label{T2} Under assumption (\ref{h1}) problem (\ref{p}) possesses at least
two (positive) solutions in $C^{1,\gamma }(\overline{\Omega })\times
C^{1,\gamma }(\overline{\Omega }),$ for certain $\gamma \in (0,1)$.
\end{theorem}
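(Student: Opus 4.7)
The plan is to combine the sub--super-solution method with a Leray--Schauder degree computation, producing one solution inside an ordered interval and a second solution outside of it.

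For the first solution, I would first construct an ordered pair of sub- and super-solutions $(\underline u,\underline v)\leq(\overline u,\overline v)$ compatible with the singular/cooperative structure in (\ref{h1}). A natural ansatz is $\underline u=c_1\phi_{1,p}^{s_1}$, $\underline v=c_2\phi_{1,q}^{s_2}$, where $\phi_{1,p},\phi_{1,q}$ are the positive first eigenfunctions of $-\Delta_p,-\Delta_q$ with Dirichlet data, and the exponents $s_1,s_2\in(0,1]$ and constants $c_1,c_2$ are chosen so that the boundary behavior absorbs the singularities in $u^{\alpha_1}$ and $v^{\beta_2}$ (this is where $\alpha_1,\beta_2>-1$ is essential). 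For $(\overline u,\overline v)$ I would take $M$-multiples of solutions of auxiliary purely $p$- and $q$-Laplace problems whose right-hand sides dominate $u^{\alpha_1}v^{\beta_1}$ and $u^{\alpha_2}v^{\beta_2}$ after using the bounds $\beta_1<p-1$, $\alpha_2<q-1$ from (\ref{h1}), with $M$ large. Freezing one variable and solving for the other, then iterating monotonically by the cooperative structure, yields a first solution $(u_0,v_0)$ in the order interval $[(\underline u,\underline v),(\overline u,\overline v)]$; elliptic regularity (Lieberman) upgrades it to $C^{1,\gamma}(\overline\Omega)^2$.

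To obtain a second solution, I would set up a compact fixed-point operator $T:C^1_0(\overline\Omega)^2\to C^1_0(\overline\Omega)^2$ whose fixed points are exactly solutions of (\ref{p}), with the singular nonlinearities regularized on a thin boundary strip in a way that preserves the sub-super pair. The strict sub--super interval $\mathcal O=\{(u,v):\underline u<u<\overline u,\ \underline v<v<\overline v\}$ (relatively open in $C^1_0\times C^1_0$ thanks to Hopf's lemma applied to $\underline u,\overline u-u_0$, etc.) contains $(u_0,v_0)$, and by the standard index-of-an-ordered-interval argument one has $\deg_{\mathrm{LS}}(I-T,\mathcal O,0)=1$. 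The key step is then to show that on a sufficiently large ball $B_R\supset\mathcal O$ one has $\deg_{\mathrm{LS}}(I-T,B_R,0)=0$, which I would achieve by homotopying the right-hand sides to something with no solution (e.g.\ adding a large multiple of a fixed positive function, or passing to a one-parameter family $(P_t)$ with $t\to\infty$), and checking that no solution of $(P_t)$ can escape through $\partial B_R$. The excision property then yields a second solution in $B_R\setminus\overline{\mathcal O}$, necessarily distinct from $(u_0,v_0)$.

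The main obstacle, as usual for systems of this type, is the uniform a priori bound along the homotopy: one must prevent $\|u_t\|_{C^1},\|v_t\|_{C^1}$ from blowing up. This is exactly where the Sobolev-type constraints $\beta_1<\tfrac{q^*}{p^*}(p-1-\alpha_1)$ and $\alpha_2<\tfrac{p^*}{q^*}(q-1-\beta_2)$ are decisive: testing the two equations with $u$ and $v$ and applying Sobolev embeddings produces a coupled system of inequalities in $\|u_t\|_{p^*}$ and $\|v_t\|_{q^*}$ whose exponents, by these inequalities, sit strictly below the Young-inequality threshold, so one extracts an $L^{p^*}\times L^{q^*}$ bound. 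Moser iteration then upgrades this to $L^\infty$, the singular terms $u^{\alpha_1},v^{\beta_2}$ are controlled from below using $\underline u,\underline v$, and standard nonlinear regularity closes the $C^{1,\gamma}$ bound. Once this a priori estimate is in hand, the homotopy invariance of the degree finishes the proof.
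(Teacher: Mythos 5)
Your overall architecture (ordered sub--supersolution pair giving a first solution and local degree $1$, degree $0$ on a large set, excision to get a second solution) is the same as the paper's, and your reading of the roles of $\alpha_1,\beta_2>-1$ (absorbing the boundary singularity) and of the Sobolev-type constraints in (\ref{h1}) (closing the coupled $L^{p^*}\times L^{q^*}$ estimates and the Moser iteration) is correct. The genuine gap is in the degree-zero step. Because $\alpha_1<0<\beta_1<p-1$ and $\beta_2<0<\alpha_2<q-1$, the system is globally \emph{sublinear}: one can manufacture arbitrarily large supersolutions, so the problem perturbed by a large forcing $t\psi$ remains solvable for every $t$, and ``homotopying to something with no solution by adding a large multiple of a fixed positive function'' does not yield nonexistence here. (Moreover, along such a homotopy your test-function estimate acquires a term $t\int\psi u$ and is no longer uniform in $t$.) The mechanism that actually works, and the one the paper uses, is different: after truncating the unknowns between $\underline u$ and the a priori bound $R$ (which makes admissibility of the homotopy automatic), the right-hand sides are deformed to $m\widetilde{u}^{\,p-1}$ and $m\widetilde{v}^{\,q-1}$ with $m>\max\{\lambda_{1,p},\lambda_{1,q}\}$, and nonexistence at the endpoint $t=0$ follows from an iteration that would otherwise produce a positive solution of $-\Delta_p u=(\lambda_{1,p}+\delta)u^{p-1}$ for small $\delta>0$, contradicting the isolation of the first eigenvalue (Proposition \ref{P6}).

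Two further points need tightening. First, a regularized operator cannot have fixed points that ``are exactly solutions of (\ref{p})'': since $u$ vanishes on $\partial\Omega$, the singularity of $u^{\alpha_1}$ is active precisely where every solution lives. The paper instead replaces $u^{\alpha_1}$ by $(u+\varepsilon)^{\alpha_1}$, runs the entire degree argument for each fixed $\varepsilon$, and only at the end recovers a second solution of (\ref{p}) by letting $\varepsilon=1/n\to 0$, using uniform $C^{1,\gamma}$ bounds and Arzel\`a--Ascoli and checking that the limit remains outside the ordered neighbourhood $\mathcal{\hat O}$ containing the first solution. Second, the global bound $R$ on all $C^{1,\gamma}$ solutions is obtained in the paper by a dichotomy (if no such $R$ exists there are already infinitely many solutions and nothing to prove), while your bootstrap gives the $L^\infty$ bound under (\ref{h1}); upgrading that to $C^{1,\gamma}$ requires the weighted estimates $u^{\alpha_1}v^{\beta_1}\le C_1 d(x)^{\alpha_1}$ and $u^{\alpha_2}v^{\beta_2}\le C_2 d(x)^{\beta_2}$ coming from the subsolution, which you should make explicit before invoking nonlinear regularity.
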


In the proof of the above theorem, we will use sub and supersolution method
combined with Leray-Schauder Topological degree. However, before proving
that theorem it was necessary to get some informations about the regularity
of the solutions. To this end, the below result was crucial in our approach.

\begin{theorem}
\label{T1} Assume (\ref{h1}) holds. Then, system (\ref{p}) has a positive
solution $\left( u,v\right) $ in $C^{1,\gamma }(\overline{\Omega })\times
C^{1,\gamma }(\overline{\Omega })$ for some $\gamma \in (0,1)$. Moreover,
there exist a sub-supersolution $\left( \underline{u},\underline{v}\right) ,(%
\overline{u},\overline{v})\in C^{1}(\overline{\Omega })\times C^{1}(%
\overline{\Omega })$ for (\ref{p}) such that%
\begin{equation}
\underline{u}(x)\leq u(x)\leq \overline{u}(x)\text{ and }\underline{v}%
(x)\leq v(x)\leq \overline{v}(x)\text{ for all }x\in \overline{\Omega }.
\label{c}
\end{equation}
\end{theorem}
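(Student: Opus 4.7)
The plan is to prove Theorem~\ref{T1} via the classical sub-/supersolution method, combined with a truncation and a fixed-point argument, and finally to invoke nonlinear boundary regularity for the $p$- and $q$-Laplacian.

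For the subsolution, let $\phi_{p},\phi_{q}$ be the positive $L^{\infty}$-normalized first eigenfunctions of $-\Delta_{p}$ and $-\Delta_{q}$ on $\Omega$; Hopf's lemma gives $\phi_{p},\phi_{q}\geq c_{0}\,d(\cdot,\partial\Omega)$. I would take $(\underline{u},\underline{v})=(t\phi_{p},t\phi_{q})$ with $t>0$ small. The first sub-inequality then reduces to $\lambda_{1,p}\,t^{p-1-\alpha_{1}-\beta_{1}}\,\phi_{p}^{p-1-\alpha_{1}}\leq \phi_{q}^{\beta_{1}}$, which near $\partial\Omega$ becomes $t^{p-1-\alpha_{1}-\beta_{1}}\lesssim d(\cdot,\partial\Omega)^{\beta_{1}-(p-1-\alpha_{1})}$. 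Since \eqref{h1} forces $\beta_{1}\leq p-1-\alpha_{1}$ (because $\alpha_{1}<0<\beta_{1}<p-1$) and $p-1-\alpha_{1}-\beta_{1}>0$, the inequality holds after shrinking $t$; the second line is handled symmetrically using $\alpha_{2}\leq q-1-\beta_{2}$. For the supersolution I would take $(\overline{u},\overline{v})=(M\phi_{p}^{\sigma},M\phi_{q}^{\tau})$ with exponents $\sigma,\tau\in(0,1)$ chosen so that the boundary blow-up of $-\Delta_{p}\overline{u}$ (of order $d^{(\sigma-1)(p-1)-1}$) dominates that of $\overline{u}^{\alpha_{1}}\overline{v}^{\beta_{1}}$ (of order $d^{\sigma\alpha_{1}+\tau\beta_{1}}$), which, because $\alpha_{1}>-1$, is always achievable with $\sigma$ close to $1$; the bulk constraint then becomes $M^{p-1-\alpha_{1}-\beta_{1}}\geq \text{const.}$, which holds for $M$ large since \eqref{h1} gives $p-1-\alpha_{1}-\beta_{1}>0$. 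Fixing $M$ first and then $t$ sufficiently small ensures $(\underline{u},\underline{v})\leq(\overline{u},\overline{v})$.

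With the sub/supersolution in hand, I would consider the truncated (hence non-singular) source
\[
f_{1}(x,u,v)=\bigl(\max\{u,\underline{u}\}\wedge\overline{u}\bigr)^{\alpha_{1}}\bigl(\max\{v,\underline{v}\}\wedge\overline{v}\bigr)^{\beta_{1}},
\]
and analogously $f_{2}$. Because $\underline{u},\underline{v}\geq c_{0}d$, the truncated sources lie in $L^{\infty}(\Omega)$, so the solution operator $S\colon(u,v)\mapsto(U,V)$ defined by $-\Delta_{p}U=f_{1}(x,u,v)$, $-\Delta_{q}V=f_{2}(x,u,v)$ with zero Dirichlet data is well-defined, continuous, and compact from $C^{0}(\overline{\Omega})^{2}$ into itself. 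A Schauder fixed point (or a monotone iteration exploiting the cooperative structure noted after \eqref{h1}) produces a solution $(u,v)$ of the truncated system. The weak comparison principle for the $p$- and $q$-Laplacian, together with cooperativity, then forces $(\underline{u},\underline{v})\leq (u,v)\leq (\overline{u},\overline{v})$, so the truncation is inactive and $(u,v)$ solves \eqref{p}.

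Regularity follows by a bootstrap argument: the lower bounds $u,v\geq c_{0}d$ yield $u^{\alpha_{1}}v^{\beta_{1}}\lesssim d^{\alpha_{1}}$ and $u^{\alpha_{2}}v^{\beta_{2}}\lesssim d^{\beta_{2}}$ up to bounded factors, which belong to $L^{s}(\Omega)$ for every $s<\min\{1/|\alpha_{1}|,1/|\beta_{2}|\}$, a nonempty range since $\alpha_{1},\beta_{2}>-1$. The Sobolev-type compatibility $\beta_{1}<\tfrac{q^{*}}{p^{*}}(p-1-\alpha_{1})$ is exactly what guarantees that, after one Sobolev bootstrap on $v$, the cross term $u^{\alpha_{1}}v^{\beta_{1}}$ lands in a space from which Lieberman's boundary $C^{1,\gamma}$ estimate for quasilinear equations with $L^{s}$ data can be triggered; symmetrically for the $v$-equation. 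This upgrades $(u,v)$ to $C^{1,\gamma}(\overline{\Omega})^{2}$ for some $\gamma\in(0,1)$. The main obstacle, in my view, is the sub/supersolution construction: matching the boundary behavior of $\underline{u}^{\alpha_{1}}\underline{v}^{\beta_{1}}$ to $-\Delta_{p}\underline{u}$ (and the reverse inequality for the supersolution) requires genuine exponent algebra, and since the sign of $\alpha_{i}+\beta_{i}$ is not fixed by \eqref{h1} no single universal comparison function works — the finely tuned power-function Ansatz together with the constraint $\beta_{1}<\min\{p-1,\tfrac{q^{*}}{p^{*}}(p-1-\alpha_{1})\}$ is what closes the argument consistently.
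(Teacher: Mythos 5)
Your sub- and supersolution ansatz is genuinely different from the paper's, which builds $(\underline{u},\underline{v})$ from auxiliary problems $-\Delta _{p}z_{1}=h_{1}$ with a sign-changing right-hand side and takes $(\overline{u},\overline{v})=C(\xi _{1},\xi _{2})$ with $-\Delta _{p}\xi _{1}=\phi _{1,p}^{\alpha _{1}}$. Your exponent bookkeeping for $(t\phi _{p},t\phi _{q})$ and $(M\phi _{p}^{\sigma },M\phi _{q}^{\tau })$ is consistent with (\ref{h1}); the one caveat is that $\phi _{1,p}$ is only $C^{1,\alpha }$, not $C^{2}$, so the pointwise formula for $-\Delta _{p}(\phi _{p}^{\sigma })$, whose singular gradient term you rely on near $\partial \Omega $, must be justified in the weak sense (a standard but nontrivial integration by parts with weighted test functions).

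The genuine gap is in the passage from sub/supersolutions to a solution. You assert that because $\underline{u},\underline{v}\geq c_{0}d$ the truncated sources lie in $L^{\infty }(\Omega )$. They do not: since $\alpha _{1}<0$, the truncation gives $f_{1}\leq \underline{u}^{\alpha _{1}}\overline{v}^{\beta _{1}}\leq Cd(x)^{\alpha _{1}}$, which blows up at $\partial \Omega $, and this order is attained when $u=\underline{u}$. This is exactly the obstruction the paper is organized around: one needs the Hardy--Sobolev inequality to see that $d^{\alpha _{1}}$ (with $\alpha _{1}>-1$) defines an element of $W^{-1,p^{\prime }}(\Omega )$ so that the truncated problems are solvable, and one needs a regularity theorem adapted to data of order $d^{-\gamma }$ with $\gamma <1$ (Hai's result, applied via \cite{KM}) to obtain $C^{1,\gamma }(\overline{\Omega })$ solutions and the compactness of the solution operator. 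Your final regularity step suffers from the same problem: Lieberman-type boundary estimates require essentially bounded (or $L^{s}$ with $s>N$) data, whereas $d^{\alpha _{1}}\in L^{s}$ only for $s<1/|\alpha _{1}|$, which may be far below $N$; no Sobolev bootstrap on the cross term repairs this, because the boundary singularity of $u^{\alpha _{1}}v^{\beta _{1}}$ comes from $u^{\alpha _{1}}$ alone and is already exactly $d^{\alpha _{1}}$. Relatedly, the conditions $\beta _{1}<\frac{q^{\ast }}{p^{\ast }}(p-1-\alpha _{1})$ and $\alpha _{2}<\frac{p^{\ast }}{q^{\ast }}(q-1-\beta _{2})$ are not what drives the $C^{1,\gamma }$ regularity of the first solution (only $\alpha _{1},\beta _{2}>-1$ plus the singular-data regularity theory is needed there); in the paper they enter the Moser iteration for the a priori bounds in the multiplicity argument. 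So the architecture of your proof is sound, but the two analytic inputs that make it work in the singular setting are missing, and the $L^{\infty }$ claim that replaces them is false.
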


In the present paper, a solution of (\ref{p}) is understood in the weak
sense, that is, a pair $(u,v)\in W_{0}^{1,p}(\Omega )\times
W_{0}^{1,q}(\Omega )$, with $u,v$ positive a.e. in $\Omega ,$ satisfying%
\begin{equation}
\left\{ 
\begin{array}{cc}
\int_{\Omega }|\nabla u|^{p-2}\nabla u\nabla \varphi \ dx & =\int_{\Omega
}u^{\alpha _{1}}v^{\beta _{1}}\varphi \ dx, \\ 
\int_{\Omega }|\nabla v|^{q-2}\nabla v\nabla \psi \ dx & =\int_{\Omega
}u^{\alpha _{2}}v^{\beta _{2}}\psi \ dx,%
\end{array}%
\right.  \label{7}
\end{equation}%
for all $(\varphi ,\psi )\in W_{0}^{1,p}(\Omega )\times W_{0}^{1,q}(\Omega )$%
.

The proof of Theorem \ref{T1} is done in Section \ref{S2}. The main technica%
{\small l }difficulty consists in the presence{\small \ }of singular terms
in system (\ref{p}) under condition (\ref{h1}). Our approach is based on the
sub-supersolution method in its version for systems \cite[section 5.5]{CLM}.
However, this method cannot be directly implemented due to the presence of
singular terms in system (\ref{p}). Applying the sub-supersolution method in
conjunction with the regularity result in \cite{Hai} under hypothesis (\ref%
{h1}), we prove the existence of a (positive) solution $(u,v)\in C^{1,\gamma
}(\overline{\Omega })\times C^{1,\gamma }(\overline{\Omega }),$ for certain $%
\gamma \in (0,1),$ of problem (\ref{p}).

The proof of Theorem \ref{T2} is done in Section \ref{S3}. It is based on
topological degree theory with suitable truncations. Here, it suffices to
show the existence of a second (positive) solution for problem (\ref{p}).
The first one is given by Theorem \ref{T1} which is located in a rectangle
formed by the sub-supersolutions. However, due to the singular terms in
system (\ref{p}), the degree theory cannot be directly implemented. To
handle this difficulty, the degree calculation is applied for the
regularized problem (\ref{pr}) for $\varepsilon >0$. Under assumption (\ref%
{h1}), Theorem \ref{T1} ensures the existence of a smooth solution for (\ref%
{p}). This gives rise to the possible existence a constant $R>0$ such that
all solutions $(u,v)$ with $C^{1,\gamma }$-regularity satisfy $\left\Vert
u\right\Vert _{C^{1,\gamma }},\left\Vert v\right\Vert _{C^{1,\gamma }}<R.$
On the basis of this, we show that the degree of an operator corresponding
to system (\ref{pr}) on a larger set is $0$. Another hand, we show that the
degree of an operator corresponding to the system (\ref{pr}) is $1$ on an
appropriate set. This leads to the existence of a second solution for (\ref%
{pr}) by using the excision property of Leray-Schauder degree. Then the
existence of a second solution for (\ref{p}) is derived by passing to the
limit as $\varepsilon \rightarrow 0$.

In what follows, we denote by $\phi _{1,p}$ and $\phi _{1,q}$ the normalized
positive eigenfunctions associated with the principal eigenvalues $\lambda
_{1,p}$ and $\lambda _{1,q}$ of $-\Delta _{p}$ and $-\Delta _{q}$,
respectively: 
\begin{equation}
\begin{array}{c}
-\Delta _{p}\phi _{1,p}=\lambda _{1,p}\left\vert \phi _{1,p}\right\vert
^{p-2}\phi _{1,p}\text{ \ in }\Omega ,\text{ \ }\phi _{1,p}=0\text{ \ on }%
\partial \Omega ,\text{ \ }\int_{\Omega }\phi _{1,p}^{p}=1%
\end{array}
\label{6}
\end{equation}%
and%
\begin{equation}
\begin{array}{c}
-\Delta _{q}\phi _{1,q}=\lambda _{1,q}\left\vert \phi _{1,q}\right\vert
^{q-2}\phi _{1,q}\text{ \ in }\Omega ,\text{ \ }\phi _{1,q}=0\text{ \ on }%
\partial \Omega ,\text{ \ }\int_{\Omega }\phi _{1,q}^{q}=1.%
\end{array}
\label{8}
\end{equation}

The strong maximum principle ensures the existence of positive constants $%
l_{1}$ and $l_{2}$ such that%
\begin{equation}
l_{1}\phi _{1,p}(x)\leq \phi _{1,q}(x)\leq l_{2}\phi _{1,p}(x)\text{ for all 
}x\in \Omega .  \label{5}
\end{equation}%
For a later use we recall that there exists a constant $l>0$ such that%
\begin{equation}
\phi _{1,p}(x),\phi _{1,q}(x)\geq ld(x)\text{ for all }x\in \Omega ,
\label{67}
\end{equation}%
where $d(x):=dist(x,\partial \Omega )$ (see, e.g., \cite{GST}). Moreover,
since $\phi_{1,p}$ and $\phi_{1,q}$ belongs to $C^{1}(\overline{\Omega})$,
there is $M>0$ such that 
\begin{equation}
\begin{array}{c}
M=\underset{x \in \overline{\Omega} }{\max }\{|\phi _{1,p}(x)|+|\phi
_{1,q}(x)|\}.%
\end{array}
\label{23}
\end{equation}

\section{Proof of Theorem \protect\ref{T1}: Existence of the first solution}

\label{S2}

Let us define $w_{1}$ and $w_{2}$ as the unique weak solutions of the
problems 
\begin{equation}
\left\{ 
\begin{array}{ll}
-\Delta _{p}w_{1}=w_{1}^{\alpha _{1}} & \text{in }\Omega , \\ 
w_{1}>0 & \text{in }\Omega , \\ 
w_{1}=0 & \text{on }\partial \Omega%
\end{array}%
\right. \text{ \ and \ }\left\{ 
\begin{array}{ll}
-\Delta _{q}w_{2}=w_{2}^{\beta _{2}} & \text{in }\Omega , \\ 
w_{2}>0 & \text{in }\Omega , \\ 
w_{2}=0 & \text{on }\partial \Omega ,%
\end{array}%
\right.  \label{20}
\end{equation}%
respectively, which are known to satisfy 
\begin{equation}
c_{2}\phi _{1,p}(x)\leq w_{1}(x)\leq c_{3}\phi _{1,p}(x)\text{ \ and \ }%
c_{2}^{\prime }\phi _{1,q}(x)\leq w_{2}(x)\leq c_{3}^{\prime }\phi _{1,q}(x),
\label{21}
\end{equation}%
with positive constants $c_{2},c_{3},c_{2}^{\prime },c_{3}^{\prime }$ (see 
\cite{GST}). Consider $\xi _{1},\xi _{2}\in C^{1}\left( \overline{\Omega }%
\right) $ the solutions of the homogeneous Dirichlet problems:%
\begin{equation}
\left\{ 
\begin{array}{ll}
-\Delta _{p}\xi _{1}(x)=\phi _{1,p}^{\alpha _{1}}(x) & \text{ in }\Omega ,
\\ 
\xi _{1}=0 & \text{ on }\partial \Omega%
\end{array}%
\right. ,\text{ }\left\{ 
\begin{array}{ll}
-\Delta _{q}\xi _{2}(x)=\phi _{1,q}^{\beta _{2}}(x) & \text{ in }\Omega , \\ 
\xi _{2}=0 & \text{\ on }\partial \Omega .%
\end{array}%
\right.  \label{12}
\end{equation}%
The Hardy--Sobolev inequality (see, e.g., \cite[Lemma 2.3]{AC}) guarantees
that the right-hand side of (\ref{12}) belongs to $W^{-1,p^{\prime }}(\Omega
)$ and $W^{-1,q^{\prime }}(\Omega )$, respectively. Consequently, the
Minty--Browder theorem (see \cite[Theorem V.15]{B}) implies the existence of
unique $\xi _{1}$ and $\xi _{2}$ in (\ref{12}). Moreover, (\ref{20}), (\ref%
{21}), the monotonicity of the operators $-\Delta _{p}$ and $-\Delta _{q}$
yield 
\begin{equation}
c_{0}\phi _{1,p}(x)\leq \xi _{1}(x)\leq c_{1}\phi _{1,p}(x)\text{ and }%
c_{0}^{\prime }\phi _{1,q}(x)\leq \xi _{2}(x)\leq c_{1}^{\prime }\phi
_{1,q}(x)\text{ in }\Omega ,  \label{36}
\end{equation}%
for some positive constants $c_{0},c_{1},c_{0}^{\prime },c_{1}^{\prime }$.
Let $z_{1}$ and $z_{2}$ satisfy%
\begin{equation}
-\Delta _{p}z_{1}(x)=h_{1}(x),\text{ }z_{1}=0\text{ \ on }\partial \Omega ,
\label{1}
\end{equation}%
and%
\begin{equation}
-\Delta _{q}z_{2}(x)=h_{2}(x),\text{ }z_{2}=0\text{ \ on }\partial \Omega 
\text{,}  \label{2}
\end{equation}%
where 
\begin{equation}
h_{1}(x)=\left\{ 
\begin{array}{ll}
\phi _{1,p}^{\alpha _{1}}(x) & \text{in \ }\Omega \backslash \overline{%
\Omega }_{\delta }, \\ 
-\phi _{1,p}^{\alpha _{1}}(x) & \text{in \ }\Omega _{\delta },%
\end{array}%
\right.  \label{h1*}
\end{equation}

\begin{equation}
h_{2}(x)=\left\{ 
\begin{array}{ll}
\phi _{1,q}^{\beta _{2}}(x) & \text{in \ }\Omega \backslash \overline{\Omega 
}_{\delta }, \\ 
-\phi _{1,q}^{\beta _{2}}(x) & \text{in \ }\Omega _{\delta }%
\end{array}%
\right.  \label{h2*}
\end{equation}%
and 
\begin{equation*}
\Omega _{\delta }=\left\{ x\in \Omega :d(x)<\delta \right\} ,
\end{equation*}%
with a fixed $\delta >0$ sufficiently small and $d(x)=d\left( x,\partial
\Omega \right) $.

The Hardy-Sobolev inequality together with the Minty-Browder theorem imply
the existence and uniqueness of $z_{1}$ and $z_{2}$ in (\ref{1}) and (\ref{2}%
). Moreover, (\ref{1}) and (\ref{2}), the monotonicity of the operators $%
-\Delta _{p}$ and $-\Delta _{q} $ and \cite[Corollary 3.1]{Hai} imply that 
\begin{equation}
\begin{array}{l}
\frac{c_{0}}{2}\phi _{1,p}(x)\leq z_{1}(x)\leq c_{1}\phi _{1,p}(x)\text{ and 
}\frac{c_{0}^{\prime }}{2}\phi _{1,q}(x)\leq z_{2}(x)\leq c_{1}^{\prime
}\phi _{1,q}(x)\text{ in }\Omega .%
\end{array}
\label{c2}
\end{equation}

\vspace{0.2cm} Next, our goal is to show the existence of sub and
supersolution for $(P)$. \newline

\noindent \textbf{Existence of subsolution:} \newline

For a constant $C>0$, we have 
\begin{equation}
\begin{array}{l}
-C^{-(p-1)}\phi _{1,p}^{\alpha _{1}}(x)<0\leq (C^{-1}z_{1}(x))^{\alpha
_{1}}(C^{-1}z_{2}(x))^{\beta _{1}},\text{ }x\in \Omega _{\delta }%
\end{array}
\label{18}
\end{equation}%
and%
\begin{equation}
\begin{array}{c}
-C^{-(q-1)}\phi _{1,q}^{\beta _{2}}(x)<0\leq (C^{-1}z_{1}(x))^{\alpha
_{2}}(C^{-1}z_{2}(x))^{\beta _{2}},\text{ }x\in \Omega _{\delta }.%
\end{array}
\label{19}
\end{equation}%
Let $\mu >0$ be a constant such that%
\begin{equation}
\begin{array}{c}
\phi _{1}\left( x\right) ,\phi _{2}\left( x\right) \geq \mu \text{ in }%
\Omega \backslash \overline{\Omega }_{\delta }.%
\end{array}
\label{256}
\end{equation}%
Then, since $\alpha _{1}<0<\beta _{1}$, (\ref{c2}) and (\ref{256}) lead to 
\begin{equation}
\begin{array}{l}
C^{\alpha _{1}+\beta _{1}-(p-1)}\phi _{1,p}^{\alpha
_{1}}(x)(z_{1}(x))^{-\alpha _{1}}\leq C^{\alpha _{1}+\beta _{1}-(p-1)}\phi
_{1,p}^{\alpha _{1}}(x)(c_{1}\phi _{1,p}(x))^{-\alpha _{1}} \\ 
=C^{\alpha _{1}+\beta _{1}-(p-1)}(Mc_{1})^{-\alpha _{1}}<(c_{0}^{\prime }\mu
)^{\beta _{1}}\leq (c_{0}^{\prime }\phi _{1,q}(x))^{\beta _{1}} \\ 
\leq (z_{2}\left( x\right) )^{\beta _{1}},\ \ \text{for all }x\in \Omega
\backslash \overline{\Omega }_{\delta },%
\end{array}
\label{28}
\end{equation}%
provided $C>0$ large enough. This is equivalent to%
\begin{equation}
\begin{array}{l}
C^{-(p-1)}\phi _{1,p}^{\alpha _{1}}(x)<(C^{-1}z_{1}(x))^{\alpha
_{1}}(C^{-1}z_{2}\left( x\right) )^{\beta _{1}},\ \ \text{for all }x\in
\Omega \backslash \overline{\Omega }_{\delta }.%
\end{array}
\label{29*}
\end{equation}%
Similarly, 
\begin{equation}
\begin{array}{c}
C^{-(q-1)}\phi _{1,q}^{\beta _{2}}(x)<\left( C^{-1}z_{1}\left( x\right)
\right) ^{\alpha _{2}}(C^{-1}z_{2}(x))^{\beta _{2}}\ \ \text{for all }x\in
\Omega \backslash \overline{\Omega }_{\delta },%
\end{array}
\label{29}
\end{equation}%
for $C>0$ large enough. The pair 
\begin{equation}
\begin{array}{c}
\left( \underline{u},\underline{v}\right) =C^{-1}\left( z_{1},z_{2}\right) .%
\end{array}
\label{30}
\end{equation}%
is a subsolution for $(P)$, Indeed, a direct computation shows that 
\begin{equation}
\begin{array}{c}
\int_{\Omega }\left\vert \nabla \underline{u}\right\vert ^{p-2}\nabla 
\underline{u}\nabla \varphi \text{ }dx=C^{-(p-1)}\int_{\Omega \backslash
\Omega _{\delta }}\phi _{1,p}^{\alpha _{1}}\varphi \text{ }%
dx-C^{-(p-1)}\int_{\Omega _{\delta }}\phi _{1,p}^{\alpha _{1}}\varphi \text{ 
}dx%
\end{array}
\label{*}
\end{equation}%
and%
\begin{equation}
\begin{array}{c}
\int_{\Omega }\left\vert \nabla \underline{v}\right\vert ^{q-2}\nabla 
\underline{v}\nabla \psi =C^{-(q-1)}\int_{\Omega \backslash \Omega _{\delta
}}\phi _{1,q}^{\beta _{2}}\psi \text{ }dx-C^{-(q-1)}\int_{\Omega _{\delta
}}\phi _{1,q}^{\beta _{2}}\psi \text{ }dx,%
\end{array}
\label{**}
\end{equation}%
where $\left( \varphi ,\psi \right) \in W_{0}^{1,p}\left( \Omega \right)
\times W_{0}^{1,q}\left( \Omega \right) $ with $\varphi ,\psi \geq 0$.
Combining (\ref{*}), (\ref{**}), (\ref{18}), (\ref{19}), (\ref{28}) and (\ref%
{29}), it is readily seen that%
\begin{equation*}
\begin{array}{c}
\int_{\Omega }\left\vert \nabla \underline{u}\right\vert ^{p-2}\nabla 
\underline{u}\nabla \varphi \leq \int_{\Omega }\underline{u}^{\alpha _{1}}%
\underline{v}^{\beta _{1}}\varphi%
\end{array}%
\end{equation*}%
and%
\begin{equation*}
\begin{array}{c}
\int_{\Omega }\left\vert \nabla \underline{v}\right\vert ^{q-2}\nabla 
\underline{v}\nabla \psi \leq \int_{\Omega }\underline{u}^{\alpha _{2}}%
\underline{v}^{\beta _{2}}\psi ,%
\end{array}%
\end{equation*}%
for all $\left( \varphi ,\psi \right) \in W_{0}^{1,p}\left( \Omega \right)
\times W_{0}^{1,q}\left( \Omega \right) $ with $\varphi ,\psi \geq 0$. This
proves that $\left( \underline{u},\underline{v}\right) $ is a subsolution
for $(P)$.\newline

\noindent \textbf{Existence of supersolution:} \newline

Next, we prove that 
\begin{equation}
(\overline{u},\overline{v})=C(\xi _{1},\xi _{2})  \label{32}
\end{equation}%
is a supersolution for problem (\ref{p}) for $C>0$ large enough. Obviously,
we have $\left( \overline{u},\overline{v}\right) \geq \left( \underline{u},%
\underline{v}\right) $ in $\overline{\Omega }$ for $C$ large enough. Taking
into account (\ref{12}), (\ref{36}), (\ref{23}) and (\ref{h1}) we derive
that in $\overline{\Omega }$ one has the estimates 
\begin{equation*}
\begin{array}{l}
\overline{u}^{-\alpha _{1}}\overline{v}^{-\beta _{1}}(-\Delta _{p}\overline{u%
})=C^{p-1-\alpha _{1}-\beta _{1}}\xi _{2}^{-\beta _{1}}{\geq }C^{p-1-\alpha
_{1}-\beta _{1}}(c_{1}^{\prime }\phi _{1,q}(x))^{-\beta _{1}} \\ 
\geq C^{p-1-\alpha _{1}-\beta _{1}}(c_{1}^{\prime }M)^{-\beta _{1}}\geq 1%
\text{ in }\overline{\Omega }%
\end{array}%
\end{equation*}%
and%
\begin{equation*}
\overline{u}^{-\alpha _{2}}\overline{v}^{-\beta _{2}}(-\Delta _{q}\overline{v%
}){\geq }C^{q-1-\alpha _{2}-\beta _{2}}(c_{1}M)^{-\alpha _{2}}\geq 1\text{
in }\overline{\Omega },
\end{equation*}%
provided that $C>0$ is sufficiently large. Consequently, it turns out that%
\begin{equation}
\int_{\Omega }\left\vert \nabla \overline{u}\right\vert ^{p-2}\nabla 
\overline{u}\nabla \varphi \text{ }dx\geq \int_{\Omega }\overline{u}^{\alpha
_{1}}\overline{v}^{\beta _{1}}\varphi \text{ }dx  \label{33}
\end{equation}%
and 
\begin{equation}
\int_{\Omega }\left\vert \nabla \overline{v}\right\vert ^{q-2}\nabla 
\overline{v}\nabla \psi \text{ }dx\geq \int_{\Omega }\overline{u}^{\alpha
_{2}}\overline{v}^{\beta _{2}}\psi \text{ }dx,  \label{34}
\end{equation}%
for all $\left( \varphi ,\psi \right) \in W_{0}^{1,p}\left( \Omega \right)
\times W_{0}^{1,q}\left( \Omega \right) .$ \newline

\noindent \textbf{Proof of Theorem \ref{T1} (conclusion): } \newline

Using (\ref{h1}), (\ref{67}), (\ref{c}), (\ref{30}), (\ref{32}), (\ref{c2})
and (\ref{36}), we get%
\begin{equation*}
u^{\alpha _{1}}v^{\beta _{1}}\leq \underline{u}^{\alpha _{1}}\overline{v}%
^{\beta _{1}}\leq C_{1}d(x)^{\alpha _{1}}\text{ for all }x\in \Omega
\end{equation*}%
and%
\begin{equation*}
u^{\alpha _{2}}v^{\beta _{2}}\leq \overline{u}^{\alpha _{2}}\underline{v}%
^{\beta _{2}}\leq C_{2}d(x)^{\beta _{2}}\text{ for all }x\in \Omega ,
\end{equation*}%
where $C_{1}$ and $C_{2}$ are positive constants. Then, owing to \cite[%
Theorem 2]{KM} we deduce that there exists a solution $(u,v)\in C^{1,\gamma
}(\overline{\Omega })\times C^{1,\gamma }(\overline{\Omega }),$ for some $%
\gamma \in (0,1),$ of problem (\ref{p}) within $\left[ \underline{u},%
\overline{u}\right] \times \left[ \underline{v},\overline{v}\right] $. This
complete the proof.

\section{Proof of Theorem \protect\ref{T2}}

\label{S3}

According to Theorem \ref{T1} we know that problem (\ref{p}) possesses a
(positive) solution $(u,v)$ in $C^{1,\gamma }(\overline{\Omega })\times
C^{1,\gamma }(\overline{\Omega }),$ located in the rectangle $[\underline{u},%
\overline{u}]\times \lbrack \underline{v},\overline{v}]$ for certain $\gamma
\in (0,1)$. Thus, to prove Theorem \ref{T2} it suffices to show the
existence of a second solution for problem (\ref{p}).

Before starting the proof of Theorem \ref{T2}, we would like point out that
by Theorem \ref{T1} the set of solutions $(u,v)$ in $C^{1,\gamma }(\overline{%
\Omega })\times C^{1,\gamma }(\overline{\Omega }),$ $\gamma \in (0,1),$ for
problem (\ref{p}) is not empty. Then, without any loss of generality, we may
assume that there is a constant $R>0$ such that all solutions $(u,v)$ with $%
C^{1,\gamma }$-regularity satisfy 
\begin{equation}
\left\Vert u\right\Vert _{C^{1,\gamma }(\overline{\Omega })},\left\Vert
v\right\Vert _{C^{1,\gamma }\overline{\Omega }}<R.  \label{15}
\end{equation}
Otherwise, there are infinity solutions with $C^{1,\gamma }$-regularity and
the proof of Theorem \ref{T2} is completed.

Hereafter, we denote 
\begin{equation*}
B_{R}(0)=\left\{ (u,v)\in C^{1}(\overline{\Omega })\times C^{1}(\overline{%
\Omega })\,:\,\Vert u\Vert _{C^{1}}+\Vert v\Vert _{C^{1}}<R\right\} ,
\end{equation*}%
\begin{equation*}
\mathcal{O}_{R}=\left\{ (u,v)\in B_{R}(0)\,:\,\underline{u}\ll u\ll R\,\,\,%
\mbox{and}\,\,\,\underline{v}\ll v\ll R\right\}
\end{equation*}%
and%
\begin{equation*}
\mathcal{\hat{O}}=\left\{ (u,v)\in B_{R}(0)\,:\,\underline{u}\ll u\ll \hat{u}%
\,\,\,\mbox{and}\,\,\,\underline{v}\ll v\ll \hat{v}\right\} ,
\end{equation*}%
where 
\begin{equation}
(\hat{u},\hat{v})=\Lambda (w_{1},w_{2})  \label{3}
\end{equation}%
with $w_{1},w_{2}$ fixed in (\ref{20}) and $\Lambda >0$ is a constant which
will be chosen later on. A simple computation gives that $\mathcal{O}_{R}$
and $\mathcal{\hat{O}}$ are open sets in $C^{1}(\overline{\Omega })\times
C^{1}(\overline{\Omega })$.

In what follows, we will assume without loss of generality that 
\begin{equation*}
R>\max \{\Vert \underline{u}\Vert _{\infty },\Vert \overline{u}\Vert
_{\infty },\Vert \underline{v}\Vert _{\infty },\Vert \overline{v}\Vert
_{\infty },\Vert \hat{u}\Vert _{\infty },\Vert \hat{v}\Vert _{\infty }\}.
\end{equation*}%
In the sequel, we use the notation $u_{1}\ll u_{2}$ when $u_{1},u_{2}\in
C^{1}(\overline{\Omega })$ satisfy: 
\begin{equation*}
\begin{array}{c}
u_{1}(x)<u_{2}(x)\,\,\,\forall x\in \Omega \,\,\,\mbox{and}\,\,\,\frac{%
\partial u_{2}}{\partial \nu }<\frac{\partial u_{1}}{\partial \nu }\,\,\,%
\mbox{on}\,\,\,\partial \Omega ,%
\end{array}%
\end{equation*}%
where $\nu $ is the outward normal to $\partial \Omega $.

The next proposition is useful for proving our second main result.

\begin{proposition}
\label{P1}Assume (\ref{h1}) holds. Then all solutions $(u,v)$ of (\ref{p})
within $[\underline{u},\overline{u}]\times \lbrack \underline{v},\overline{v}%
]$ verifies 
\begin{equation}
\begin{array}{c}
u(x)\ll \hat{u}(x)\text{ \ and \ }v(x)\ll \hat{v}(x)\text{ \ in }\Omega .%
\end{array}
\label{c3}
\end{equation}
\end{proposition}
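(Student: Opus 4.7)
The plan is to choose the scaling constant $\Lambda$ so large that $(\hat{u},\hat{v})=\Lambda(w_{1},w_{2})$ strictly dominates every weak solution $(u,v)$ of (\ref{p}) lying in the rectangle $[\underline{u},\overline{u}]\times [\underline{v},\overline{v}]$, both pointwise inside $\Omega$ and in the Hopf sense at $\partial\Omega$. Since the two inequalities $u\ll \hat{u}$ and $v\ll \hat{v}$ decouple across the two equations of (\ref{p}), I would establish the first and then repeat the symmetric argument for the second.

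For $u\ll \hat{u}$, the first step is a uniform upper bound on the right-hand side of the first equation of (\ref{p}). Combining the localisation (\ref{c}), the definitions (\ref{30}) and (\ref{32}), the pointwise estimates (\ref{c2}) and (\ref{36}), the comparison (\ref{5}) between $\phi_{1,p}$ and $\phi_{1,q}$, the global bound (\ref{23}), and the signs $\alpha_{1}<0<\beta_{1}$ from (\ref{h1}), one produces a constant $C_{3}>0$ independent of $(u,v)$ such that
\[ u^{\alpha_{1}}v^{\beta_{1}}\leq C_{3}\,\phi_{1,p}^{\alpha_{1}}(x)\quad \text{in }\Omega. \]
On the other hand, $\hat{u}=\Lambda w_{1}$ solves $-\Delta_{p}\hat{u}=\Lambda^{p-1}w_{1}^{\alpha_{1}}$, and the right inequality in (\ref{21}) together with $\alpha_{1}<0$ yields $-\Delta_{p}\hat{u}\geq \Lambda^{p-1}c_{3}^{\alpha_{1}}\phi_{1,p}^{\alpha_{1}}(x)$ in $\Omega$. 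Choosing $\Lambda$ so large that $\Lambda^{p-1}c_{3}^{\alpha_{1}}>C_{3}$ then gives the strict pointwise differential inequality
\[ -\Delta_{p}\hat{u}>u^{\alpha_{1}}v^{\beta_{1}}=-\Delta_{p}u\quad \text{in }\Omega,\qquad \hat{u}=u=0\text{ on }\partial\Omega. \]

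To pass from this strict inequality to the ordering $u\ll \hat{u}$, I would invoke a strong comparison / Hopf-type principle for the $p$-Laplacian: the weak comparison principle delivers $u\leq \hat{u}$ in $\Omega$, and the strict differential inequality, combined with the $C^{1}(\overline{\Omega})$-regularity of both $u$ (from Theorem \ref{T1}) and $\hat{u}$ (by construction) and the fact that they vanish on $\partial\Omega$, then forces $u(x)<\hat{u}(x)$ for every $x\in \Omega$ together with the boundary condition $\partial \hat{u}/\partial\nu <\partial u/\partial\nu$ on $\partial\Omega$. An entirely analogous computation, based on the second equation of (\ref{p}), the bound $u^{\alpha_{2}}v^{\beta_{2}}\leq C_{4}\,\phi_{1,q}^{\beta_{2}}(x)$, and the equation $-\Delta_{q}w_{2}=w_{2}^{\beta_{2}}$, produces $v\ll \hat{v}$ provided $\Lambda$ is taken sufficiently large; a single choice of $\Lambda$ accommodates both requirements.

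The main obstacle is the rigorous use of the strong comparison / Hopf-type principle for the $p$-Laplacian, which is delicate when $p\neq 2$ and the right-hand sides are singular near $\partial\Omega$. Because both $\hat{u}$ and $u$ belong to $C^{1}(\overline{\Omega})$ and the differential inequality is strict pointwise in $\Omega$, a standard strong comparison statement (of Cuesta--Takac type, or Vazquez's strong maximum principle applied to the nonnegative difference $\hat{u}-u$ after an appropriate linearisation argument) finishes the proof.
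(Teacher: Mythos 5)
Your proposal is correct and follows essentially the same route as the paper: bound $u^{\alpha_1}v^{\beta_1}\le \underline{u}^{\alpha_1}\overline{v}^{\beta_1}\le C_3\phi_{1,p}^{\alpha_1}$ (and symmetrically for the second equation), compare with $-\Delta_p\hat{u}=\Lambda^{p-1}w_1^{\alpha_1}$ via (\ref{21}), take $\Lambda$ large, and conclude by a strong comparison principle; the paper invokes \cite[Proposition 2.6]{AR} at this last step. The only point worth noting is that for $p\neq 2$ a merely strict pointwise inequality between the right-hand sides is not by itself the hypothesis of such principles — one needs the locally uniform gap ($f\prec g$ on compact subsets) as in \cite{AR} — but your estimate $-\Delta_p\hat{u}-(-\Delta_p u)\geq (\Lambda^{p-1}c_3^{\alpha_1}-C_3)\phi_{1,p}^{\alpha_1}$ does supply exactly that gap, so the argument goes through.
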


\begin{proof}
From (\ref{32}), (\ref{c3}), (\ref{h1}), (\ref{36}), (\ref{67}), (\ref{20})
and (\ref{21}), it follows that%
\begin{equation}
\begin{array}{l}
-\Delta _{p}u=u^{\alpha _{1}}v^{\beta _{1}}\leq \underline{u}^{\alpha _{1}}%
\overline{v}^{\beta _{1}}\leq (C^{-1}\frac{c_{0}}{2}\phi _{1,p})^{\alpha
_{1}}(Cc_{1}^{\prime }\phi _{1,q})^{\beta _{1}} \\ 
\leq C^{-\alpha _{1}+\beta _{1}}(\frac{c_{0}}{2})^{\alpha
_{1}}(c_{1}^{\prime }M)^{\beta _{1}}\phi _{1,p}^{\alpha _{1}}\leq C^{-\alpha
_{1}+\beta _{1}}(\frac{c_{0}}{2})^{\alpha _{1}}(c_{1}^{\prime }M)^{\beta
_{1}}(c_{3}w_{1})^{\alpha _{1}} \\ 
< \Lambda ^{p-1}w_{1}^{\alpha _{1}}=-\Delta _{p}(\Lambda w_{1})=-\Delta _{p}%
\hat{u}\text{ in }\Omega ,%
\end{array}
\label{69}
\end{equation}%
provided that $\Lambda $ is large enough. Proceeding in the same way with
the second equation in (\ref{p}) results in%
\begin{equation}
\begin{array}{c}
-\Delta _{q}v < -\Delta _{q}(\Lambda \xi _{2})=-\Delta _{q}\hat{v}\text{\ \
in }\Omega ,%
\end{array}%
\end{equation}%
for $\Lambda $ large enough. Consequently, the strong comparison principle
found in \cite[Proposition 2.6]{AR} leads to the conclusion. This ends the
proof.
\end{proof}

\subsection{An auxiliary problem}

In this subsection, we will use the Topological degree to get the second
solution. However, the singular terms in system (\ref{p}) prevents the
degree calculation to be well defined. To overcome this difficulty, we
disturb system (\ref{p}) by introducing a parameter $\varepsilon \in (0,1)$.
This gives rise to a regularized system for (\ref{p}) defined for $%
\varepsilon >0$ as follows:%
\begin{equation}
\left\{ 
\begin{array}{ll}
-\Delta _{p}u=\left( u+\varepsilon \right) ^{\alpha _{1}}v^{\beta _{1}} & 
\text{ in }\Omega , \\ 
-\Delta _{q}v=u^{\alpha _{2}}\left( v+\varepsilon \right) ^{\beta _{2}} & 
\text{ in }\Omega , \\ 
u(x),v(x)>0 & \text{ in } \Omega, \\ 
u,v=0 & \text{ on }\partial \Omega .%
\end{array}%
\right.  \tag{$P_{r}$}  \label{pr}
\end{equation}%
We apply the degree theory for the regularized problem (\ref{pr}). This
leads to find a positive solution for (\ref{pr}) lying outside of the set $%
\mathcal{\hat{O}}$. Then the existence of a second solution of (\ref{p}) is
obtain by passing to the limit in (\ref{pr}) as $\varepsilon \rightarrow 0$.
The proof comprises four steps.

\begin{remark}
\label{R3}It is very important to observe that the same reasoning exploited
in the proof of Theorem \ref{T1} and Proposition \ref{P1} furnishes that
problem (\ref{pr}) has a (positive) solution $(u_{\varepsilon
},v_{\varepsilon })\in C^{1,\gamma }(\overline{\Omega })\times C^{1,\gamma }(%
\overline{\Omega }),$ $\gamma \in (0,1)$, within $\left[ \underline{u},%
\overline{u}\right] \times \left[ \underline{v},\overline{v}\right] ,$ where
functions $(\underline{u},\underline{v})$ and $(\overline{u},\overline{v})$
are sub-supersolutions of (\ref{pr}) and $(u_{\varepsilon },v_{\varepsilon
}) $ verifies%
\begin{equation*}
\begin{array}{c}
u_{\varepsilon }(x)\ll \hat{u}(x)\text{ \ and \ }v_{\varepsilon }(x)\ll \hat{%
v}(x)\text{ \ in }\Omega \text{,}%
\end{array}%
\end{equation*}%
for all $\varepsilon \in (0,1)$.
\end{remark}

\vspace{0.5 cm}

\noindent \textbf{Topological degree: The first estimate.}

\bigskip

We transform the problem (\ref{pr}) to one with helpful monotonicity
properties. To this end, let us introduce the functions 
\begin{equation}
\widetilde{\phi }=\left\{ 
\begin{array}{l}
R\text{ \ if }\phi \geq R \\ 
\phi \text{ \ if }\underline{u}\leq \phi \leq R \\ 
\underline{u}\text{ \ if }\phi \leq \underline{u}%
\end{array}%
\right. ,\text{ \ }\widetilde{\varphi }=\left\{ 
\begin{array}{l}
R\text{ \ if }\varphi \geq R \\ 
\phi \text{ \ if }\underline{v}\leq \varphi \leq R \\ 
\underline{v}\text{ \ if }\varphi \leq \underline{v},%
\end{array}%
\right.  \label{5**}
\end{equation}%
where $(\underline{u},\underline{v})$ and $R$ are given by (\ref{30}) and (%
\ref{15}), respectively. Define the operators%
\begin{equation*}
\begin{array}{c}
T_{p,\varepsilon }(u)=-\Delta _{p}u+\rho \max \{(\underline{u}+\varepsilon
)^{\alpha _{1}-1}R^{\beta _{1}},u^{p-1}\}, \\ 
T_{q,\varepsilon }(v)=-\Delta _{q}v+\rho \max \{R^{\alpha _{2}}(\underline{v}%
+\varepsilon )^{\beta _{2}-1},v^{q-1}\},%
\end{array}%
\end{equation*}%
for $t\in \lbrack 0,1]$, $\varepsilon \in (0,1)$ and a constant $\rho >0$.
We shall study the homotopy class of problem%
\begin{equation}
\left\{ 
\begin{array}{ll}
T_{p,\varepsilon }(u)={f_{1,\varepsilon ,t}(x,}\widetilde{{u}}{,}\widetilde{{%
v}}{)} & \text{ in }\Omega , \\ 
T_{q,\varepsilon }(v)={f_{2,\varepsilon ,t}(x,\widetilde{{u}}{,}\widetilde{{v%
}})} & \text{ in }\Omega , \\ 
u,v>0\text{ \ in }\Omega , &  \\ 
u,v=0\text{ \ on }\partial \Omega , & 
\end{array}%
\right.  \tag{$P_{f}$}  \label{50}
\end{equation}%
where functions ${f_{1,\varepsilon ,t}}$ and ${f_{2,\varepsilon ,t}}$ are
defined as follows:%
\begin{equation}
\begin{array}{c}
{f_{1,\varepsilon ,t}(x,\widetilde{{u}}{,}\widetilde{{v}})}=t(\widetilde{u}%
+\varepsilon )^{\alpha _{1}}\widetilde{v}^{\beta _{1}}+m(1-t)\widetilde{u}%
^{p-1} \\ 
+\rho \max \{(\underline{u}+\varepsilon )^{\alpha _{1}-1}R^{\beta _{1}},%
\widetilde{u}^{p-1}\},%
\end{array}
\label{15**}
\end{equation}%
\begin{equation}
\begin{array}{c}
{f_{2,\varepsilon ,t}(x,\widetilde{{u}}{,}\widetilde{{v}})}=t\widetilde{u}%
^{\alpha _{2}}(\widetilde{v}+\varepsilon )^{\beta _{2}}+m(1-t)\widetilde{v}%
^{q-1} \\ 
+\rho \max \{R^{\alpha _{2}}(\underline{v}+\varepsilon )^{\beta _{2}-1},%
\widetilde{v}^{q-1}\},%
\end{array}
\label{15*}
\end{equation}%
with a constant $m>\max \{\lambda _{1,p},\lambda _{1,q}\}$. In the sequel,
we fix the constant $\rho >0$ in (\ref{50}) sufficiently large so that the
following inequalities are satisfied: 
\begin{equation*}
\begin{array}{c}
t\alpha _{1}(s_{1}+\varepsilon )^{\alpha _{1}-1}s_{2}^{\beta _{1}}+\rho \max
\{(\underline{u}+\varepsilon )^{\alpha _{1}-1}R^{\beta
_{1}},(p-1)s_{1}^{p-2}\}\geq 0%
\end{array}%
\end{equation*}%
and 
\begin{equation*}
\begin{array}{c}
t\beta _{2}(s_{2}+\varepsilon )^{\beta _{2}-1}s_{1}^{\alpha _{2}}+\rho \max
\{R^{\alpha _{2}}(\underline{v}+\varepsilon )^{\beta
_{2}-1},(q-1)s_{2}^{q-2}\}\geq 0,%
\end{array}%
\end{equation*}%
uniformly in $x\in \Omega ,$ for $(s_{1},s_{2})\in \lbrack \underline{u}%
,R]\times \lbrack \underline{v},R],$ $\varepsilon \in (0,1)$. By the above
choice of $\rho $, the term in the right-hand side of first (resp. second)
equation in (\ref{50}) increases as $u$ (resp. $v$) increases, for all $%
\varepsilon >0$ small.

\vspace{0.5 cm}

The next result is crucial in our approach, because it establishes an
important prior estimate for system (\ref{50}). Moreover, it is also shown
that the solutions of problem (\ref{50}) cannot occur outside the rectangle
formed by the subsolution $(\underline{u},\underline{v})$ and the constant $%
R $.

\begin{proposition}
\label{P2}Assume (\ref{h1}) holds. If $(u,v)$ is a solution of (\ref{50}),
then $(u,v)$ belongs to $C^{1,\gamma }(\overline{\Omega })\times C^{1,\gamma
}(\overline{\Omega })$ for some $\gamma \in (0,1)$ and satisfies%
\begin{equation}
\left\Vert u\right\Vert _{C^{1,\gamma }(\overline{\Omega })},\left\Vert
v\right\Vert _{C^{1,\gamma }(\overline{\Omega })}<R.  \label{31}
\end{equation}%
Moreover, it holds%
\begin{equation}
\begin{array}{c}
\underline{u}(x)\ll u(x)\text{ \ and \ }\underline{v}(x)\ll v(x)\text{ \ in }%
\Omega ,\text{ \ }\forall t\in \lbrack 0,1].%
\end{array}
\label{16}
\end{equation}
\end{proposition}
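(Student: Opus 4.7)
Here is my plan. I would prove Proposition~\ref{P2} in three stages: (i) establish the pointwise bounds $\underline u\le u\le R$ and $\underline v\le v\le R$ via truncation-aware comparison, (ii) upgrade to $C^{1,\gamma}$ regularity and derive the norm estimate, and (iii) obtain the strict inequalities (\ref{16}) by a strong comparison principle.

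For the lower part of (i), I would test the weak form of $(P_f)$ against $\varphi=(\underline u-u)^{+}\in W_0^{1,p}(\Omega)$ and subtract the same testing applied to the subsolution inequality satisfied by $\underline u$ for the regularized problem $(P_r)$ (Remark~\ref{R3}). On the set $\{u<\underline u\}$ the truncation forces $\widetilde u=\underline u$, while $\widetilde v\ge\underline v$ entails $\widetilde v^{\beta_{1}}\ge\underline v^{\beta_{1}}$ because $\beta_{1}>0$. The constant $\rho$ fixed after (\ref{15*}) is large enough that the combined map $s\mapsto m(1-t)s^{p-1}+\rho\max\{A,s^{p-1}\}$ (with $A=(\underline u+\varepsilon)^{\alpha_{1}-1}R^{\beta_{1}}$) is nondecreasing and dominates any loss coming from the singular factor $(\underline u+\varepsilon)^{\alpha_{1}}$. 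Hence the right-hand side of $(P_f)$ at the truncated argument is no smaller than the right-hand side of the subsolution inequality on $\{u<\underline u\}$, and the strict $S^{+}$-monotonicity of $-\Delta_p$ (the pointwise inequality $(|\xi|^{p-2}\xi-|\eta|^{p-2}\eta)\cdot(\xi-\eta)>0$ for $\xi\neq\eta$) forces $(\underline u-u)^{+}\equiv 0$. The same argument on the second equation yields $\underline v\le v$. The upper bound $u\le R$ follows symmetrically by testing with $(u-R)^{+}$: on $\{u>R\}$ one has $\widetilde u=R$, and enlarging $R$ if necessary so that $R^{p-1}\ge A$ pointwise (allowed because $p-1>\beta_{1}$ by (\ref{h1})), the $\rho\max\{\cdot\}$ contributions on the two sides cancel and the same $S^{+}$-monotonicity closes the argument; analogously $v\le R$.

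With these pointwise bounds the truncation is inactive on $\Omega$, so $(u,v)$ solves a system with $L^{\infty}$ right-hand sides that are bounded uniformly in $t\in[0,1]$ and $\varepsilon\in(0,1)$. Lieberman's boundary regularity theorem then gives $(u,v)\in C^{1,\gamma}(\overline\Omega)\times C^{1,\gamma}(\overline\Omega)$ for some $\gamma\in(0,1)$, and the a priori hypothesis (\ref{15}) produces the strict norm estimate $\|u\|_{C^{1,\gamma}},\|v\|_{C^{1,\gamma}}<R$. Finally, since $u\ge\underline u$ and $v\ge\underline v$, and since the computation leading to (\ref{29*})--(\ref{29}) shows that $-\Delta_p\underline u<(u+\varepsilon)^{\alpha_{1}}v^{\beta_{1}}$ strictly on $\Omega\setminus\overline\Omega_{\delta}$, the strong comparison principle of \cite[Prop.~2.6]{AR} upgrades the weak inequality to $\underline u\ll u$; analogously $\underline v\ll v$. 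I expect the delicate point to be the lower bound in~(i) near $\partial\Omega$, where the singular factor $(\underline u+\varepsilon)^{\alpha_{1}}$ is largest; it is precisely the role of the max-monotonization with $\rho$ chosen large in (\ref{15*}) to absorb this singular contribution so that the comparison closes uniformly in $(t,\varepsilon)$.
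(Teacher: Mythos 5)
Your overall architecture (pointwise localization, then regularity, then strict inequalities) is reasonable, but two of your key steps do not hold up, and they are precisely the points the paper's proof is built to handle. First, the norm bound (\ref{31}): you derive it from ``the a priori hypothesis (\ref{15})'', but (\ref{15}) concerns solutions of the original problem (\ref{p}), not of the truncated regularized homotopy problem (\ref{50}), so it cannot be invoked here. Moreover your claim that, once the truncation is inactive, the right-hand sides are in $L^{\infty}$ ``uniformly in $t\in[0,1]$ and $\varepsilon\in(0,1)$'' is unjustified: the factors $(\widetilde{u}+\varepsilon)^{\alpha_{1}}$ and $(\underline{u}+\varepsilon)^{\alpha_{1}-1}R^{\beta_{1}}$ are only bounded by negative powers of $\varepsilon$, so a plain Lieberman estimate would give a $C^{1,\gamma}$ bound depending on $\varepsilon$ and on $R$, which does not yield the strict inequality $<R$. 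The paper instead runs a Moser iteration to obtain an $L^{\infty}$ bound $L$ \emph{independent of} $R$, then bounds the right-hand sides by $C_{1}d(x)^{\alpha_{1}}$ and $C_{2}d(x)^{\beta_{2}}$ (uniformly in $\varepsilon$, since $(\underline{u}+\varepsilon)^{\alpha_{1}}\leq\underline{u}^{\alpha_{1}}$) and applies the singular regularity result of \cite{Hai}; this produces a $C^{1,\gamma}$ bound independent of $R$, $\varepsilon$, $t$, and (\ref{31}) follows because $R$ was taken larger than this universal constant. A related flaw: ``enlarging $R$ so that $R^{p-1}\geq(\underline{u}+\varepsilon)^{\alpha_{1}-1}R^{\beta_{1}}$ pointwise'' cannot be done uniformly in $\varepsilon$, and $R$ must be fixed independently of $\varepsilon$.

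Second, the strict inequality (\ref{16}): you apply \cite[Prop.~2.6]{AR} directly to $-\Delta_{p}$, claiming that the computation behind (\ref{29*}) gives $-\Delta_{p}\underline{u}<(u+\varepsilon)^{\alpha_{1}}v^{\beta_{1}}$. But (\ref{29*}) uses $\underline{u}^{\alpha_{1}}$, and the map $s\mapsto(s+\varepsilon)^{\alpha_{1}}$ is \emph{decreasing}; from $u\geq\underline{u}$ you only get $(u+\varepsilon)^{\alpha_{1}}\leq(\underline{u}+\varepsilon)^{\alpha_{1}}$, the wrong direction, and since $u$ may be as large as $R$ (chosen after, and much larger than, the constant $C$ of the subsolution), the pointwise ordering $-\Delta_{p}\underline{u}\prec-\Delta_{p}u$ can fail on $\Omega\setminus\overline{\Omega}_{\delta}$, especially at $t=1$ where the helpful term $m(1-t)u^{p-1}$ disappears. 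This is exactly why the paper compares through the monotonized operator $T_{p,\varepsilon}$ (whose zero-order $\rho\max\{\cdot\}$ term makes $f_{1,\varepsilon,t}$ nondecreasing in its arguments on $[\underline{u},R]\times[\underline{v},R]$), proves a strong comparison principle for $T_{p,\varepsilon}$ in the Appendix (Proposition \ref{P0}; \cite{AR} does not directly cover the extra max-term), and uses that $\widetilde{u}\geq\underline{u}$, $\widetilde{v}\geq\underline{v}$ hold \emph{by definition} of the truncation (\ref{5**}) — so no separate weak comparison such as your test with $(\underline{u}-u)^{+}$ is needed, and the chain $T_{p,\varepsilon}(\underline{u})=f\prec f_{1,\varepsilon,t}(x,\underline{u},\underline{v})\leq f_{1,\varepsilon,t}(x,\widetilde{u},\widetilde{v})=T_{p,\varepsilon}(u)$ closes the argument. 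To repair your proof you would have to replace both the appeal to (\ref{15}) by an $R$-independent a priori estimate and the appeal to \cite{AR} by a comparison for the monotonized operator.
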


\begin{proof}
First, by Moser's iterations technique, we prove the boundedness for
solutions of (\ref{50}) in $L^{\infty }(\Omega )\times L^{\infty }(\Omega )$%
. Assuming (\ref{16}) holds, it follows that%
\begin{equation}
\begin{array}{l}
\max \{(\underline{u}+\varepsilon )^{\alpha _{1}}R^{\beta
_{1}},u^{p-1}\}-\max \{(\underline{u}+\varepsilon )^{\alpha _{1}}R^{\beta
_{1}},\widetilde{u}^{p-1}\}\geq 0\text{ in }\Omega%
\end{array}
\label{68}
\end{equation}%
and%
\begin{equation}
\begin{array}{l}
\max \{R^{\alpha _{2}}(\underline{v}+\varepsilon )^{\beta
_{2}},v^{q-1}\}-\max \{R^{\alpha _{2}}(\underline{v}+\varepsilon )^{\beta
_{2}},\widetilde{v}^{q-1}\}\geq 0\text{ in }\Omega .%
\end{array}
\label{68*}
\end{equation}%
Then, 
\begin{equation}
\left\{ 
\begin{array}{l}
-\Delta _{p}u\leq \widetilde{u}^{\alpha _{1}}\widetilde{v}^{\beta _{1}}+m%
\widetilde{u}^{p-1}\text{ in }\Omega , \\ 
-\Delta _{q}v\leq \widetilde{u}^{\alpha _{2}}\widetilde{v}^{\beta _{2}}+m%
\widetilde{v}^{q-1}\text{ in }\Omega , \\ 
u,v>0\text{ in }\Omega , \\ 
u,v=0\text{ on }\partial \Omega .%
\end{array}%
\right.  \label{500}
\end{equation}%
Given a constant $A\in (0,R]$, define on $\Omega $ the functions%
\begin{equation*}
\begin{array}{l}
u_{A}=\min \{u(x),A\}\text{ \ and \ }v_{A}=\min \{v(x),A\}.%
\end{array}%
\end{equation*}%
Acting on (\ref{50}) with 
\begin{equation*}
\begin{array}{c}
\left( \varphi ,\psi \right) =\left( u_{A}^{k_{1}p+1},v_{A}^{\overline{k}%
_{1}q+1}\right) ,%
\end{array}%
\end{equation*}%
where 
\begin{equation}
\begin{array}{c}
\left( k_{1}+1\right) p=p^{\ast }\text{ and }\left( \overline{k}%
_{1}+1\right) q=q^{\ast },%
\end{array}%
\end{equation}%
and integrating over $\Omega $ we get 
\begin{equation}
\begin{array}{l}
\left( k_{1}p+1\right) \int_{\Omega }\left\vert \nabla u_{A}\right\vert
^{p}u_{A}^{k_{1}p}\text{ }dx\leq \int_{\Omega }(\widetilde{u}^{\alpha _{1}}%
\widetilde{v}^{\beta _{1}}+m\widetilde{u}^{p-1})u_{A}^{k_{1}p}\text{ }dx%
\end{array}
\label{36*}
\end{equation}%
and%
\begin{equation}
\begin{array}{c}
\left( \overline{k}_{1}q+1\right) \int_{\Omega }\left\vert \nabla
v_{A}\right\vert ^{q}v_{A}^{\overline{k}_{1}q}\text{ }dx\leq \int_{\Omega }(%
\widetilde{u}^{\alpha _{2}}\widetilde{v}^{\beta _{2}}+m\widetilde{v}%
^{q-1})v_{A}^{\overline{k}_{1}q+1}\text{ }dx.%
\end{array}
\label{37}
\end{equation}%
By the Sobolev embedding theorem, the left-hand sides of (\ref{36*}) and (%
\ref{37}) are estimated from below as follows 
\begin{equation}
\begin{array}{l}
(k_{1}p+1)\int_{\Omega }|\nabla u_{A}|^{p}u_{A}^{k_{1}p}=\frac{k_{1}p+1}{%
\left( k_{1}+1\right) ^{p}}\int_{\Omega }|\nabla u_{A}^{k_{1}+1}|^{p}\geq
C_{1}\frac{\left( k_{1}p+1\right) }{\left( k_{1}+1\right) ^{p}}\left\Vert
u_{A}\right\Vert _{\left( k_{1}+1\right) p^{\ast }}^{p^{\ast }}%
\end{array}
\label{l6}
\end{equation}%
and 
\begin{equation}
\begin{array}{l}
\left( \overline{k}_{1}q+1\right) \int_{\Omega }\left\vert \nabla
v_{A}\right\vert ^{q}v_{A}^{\overline{k}_{1}q}=\frac{\left( \overline{k}%
_{1}q+1\right) }{\left( \overline{k}_{1}+1\right) ^{q}}\int_{\Omega }|\nabla
v_{A}^{\overline{k}_{1}+1}|^{q}\geq C_{1}^{\prime }\frac{\left( \overline{k}%
_{1}q+1\right) }{\left( \overline{k}_{1}+1\right) ^{q}}\left\Vert
v_{A}\right\Vert _{\left( \overline{k}_{1}+1\right) q^{\ast }}^{q^{\ast }},%
\end{array}
\label{l7}
\end{equation}%
where $C_{1}$ and $C_{1}^{\prime }$ are some positive constants. By noticing
that $k_{1}p+1+\alpha _{1}>0$ and $\overline{k}_{1}q+1+\beta _{2}>0$ it
turns out that%
\begin{equation}
\begin{array}{l}
\int_{\Omega }(\widetilde{u}^{\alpha _{1}}\widetilde{v}^{\beta _{1}}+m%
\widetilde{u}^{p-1})u_{A}^{k_{1}p+1}\text{ }dx\leq \int_{\Omega
}u_{A}^{\alpha _{1}+k_{1}p+1}v^{\beta _{1}}\text{ }dx+m\int_{\Omega
}u^{(k_{1}+1)p}\text{ }dx \\ 
\leq \int_{\Omega }u^{\alpha _{1}+k_{1}p+1}v^{\beta _{1}}\text{ }%
dx+m\int_{\Omega }u^{(k_{1}+1)p}\text{ }dx%
\end{array}
\label{60}
\end{equation}%
and%
\begin{equation}
\begin{array}{l}
\int_{\Omega }(\widetilde{u}^{\alpha _{2}}\widetilde{v}^{\beta _{2}}+m%
\widetilde{v}^{q-1})v_{A}^{\overline{k}_{1}q+1}\text{ }dx\leq \int_{\Omega
}u^{\alpha _{2}}v_{A}^{\overline{k}_{1}q+1+\beta _{2}}\text{ }%
dx+m\int_{\Omega }v^{(\overline{k}_{1}+1)q}\text{ }dx \\ 
\leq \int_{\Omega }u^{\alpha _{2}}v^{\overline{k}_{1}q+1+\beta _{2}}\text{ }%
dx+m\int_{\Omega }v^{(\overline{k}_{1}+1)q}\text{ }dx.%
\end{array}
\label{60*}
\end{equation}

Then, following the quite similar argument as in \cite{MM}, we obtain that $%
(u,v)\in L^{\infty }(\Omega )\times L^{\infty }(\Omega )$ and there exists a
constant $L>0,$ independent of $R$, such that $\left\Vert u\right\Vert
_{\infty },\left\Vert v\right\Vert _{\infty }\leq L$. Furthermore, from (\ref%
{30}) and (\ref{c2}), it holds%
\begin{equation}
\begin{array}{l}
\widetilde{u}^{\alpha _{1}}\widetilde{v}^{\beta _{1}}+m\widetilde{u}%
^{p-1}\leq \widetilde{u}^{\alpha _{1}}(\widetilde{v}^{\beta _{1}}+m%
\widetilde{u}^{p-1-\alpha _{1}}) \\ 
\leq \underline{u}^{\alpha _{1}}(\left\Vert v\right\Vert _{\infty }^{\beta
_{1}}+m\left\Vert u\right\Vert _{\infty }^{p-1-\alpha _{1}}) \\ 
\leq (C^{-1}\frac{c_{0}}{2}\phi _{1,p})^{\alpha _{1}}(L^{\beta
_{1}}+mL^{p-1-\alpha _{1}})\leq C_{1}d(x)^{\alpha _{1}}\text{ \ in }\Omega%
\end{array}
\label{70}
\end{equation}%
and%
\begin{equation}
\begin{array}{l}
\widetilde{u}^{\alpha _{2}}\widetilde{v}^{\beta _{2}}+m\widetilde{v}%
^{q-1}\leq \underline{v}^{\beta _{2}}(\left\Vert u\right\Vert _{\infty
}^{\alpha _{2}}+m\left\Vert v\right\Vert _{\infty }^{q-1-\beta _{2}})\leq
C_{2}d(x)^{\beta _{2}}\text{ \ in }\Omega ,%
\end{array}
\label{70*}
\end{equation}%
with positive constants $C_{1}$ and $C_{2}$. Thus, on the basis of (\ref{68}%
), (\ref{68*}), (\ref{70}), (\ref{70*}) and (\ref{500}), the nonlinear
regularity theory found in \cite{Hai} guarantees that the solutions $(u,v)$
of (\ref{50}) belong to $C^{1,\gamma }(\overline{\Omega })\times C^{1,\gamma
}(\overline{\Omega })$ for some $\gamma \in (0,1)$ and satisfy (\ref{31}).

Now, let us prove (\ref{16}). We only show the first inequality in (\ref{16}%
) because the second one can be justified similarly. To this end, we set the
functions $f,g:\Omega \rightarrow \mathbb{R}$ given by 
\begin{equation*}
f(x)=C^{-(p-1)}h_{1}(x)+\rho \max \{(\underline{u}+\varepsilon )^{\alpha
_{1}-1}R^{\beta _{1}},\underline{u}^{p-1}\}
\end{equation*}%
and 
\begin{equation*}
g(x)={f_{1,\varepsilon ,t}}({x,}\widetilde{u},\widetilde{v}).
\end{equation*}%
By Remark \ref{R3}, the strict inequalities in (\ref{18}), (\ref{29*}) and
the monotonicity of ${f_{1,\varepsilon ,t}}$ imply%
\begin{equation}
\begin{array}{l}
f(x)=-C^{-(p-1)}\phi _{1,p}^{\alpha _{1}}(x)+\rho \max \{(\underline{u}%
+\varepsilon )^{\alpha _{1}-1}R^{\beta _{1}},\underline{u}^{p-1}\} \\ 
<t(\underline{u}{+\varepsilon )}^{\alpha _{1}}\underline{v}^{\beta
_{1}}+(1-t){m}\underline{u}^{p-1}+\rho \max \{(\underline{u}{+\varepsilon )}%
^{\alpha _{1}-1}R^{\beta _{1}},\underline{u}^{p-1}\} \\ 
={f_{1,\varepsilon ,t}}({x,}\underline{u},\underline{v})\leq {%
f_{1,\varepsilon ,t}}({x,}\widetilde{u},\widetilde{v})=g(x)\text{ \ in }%
\Omega _{\delta }%
\end{array}
\label{10}
\end{equation}%
and%
\begin{equation}
\begin{array}{l}
f(x)=C^{-(p-1)}\phi _{1,p}^{\alpha _{1}}(x)+\rho \max \{(\underline{u}%
+\varepsilon )^{\alpha _{1}-1}R^{\beta _{1}},\underline{u}^{p-1}\} \\ 
<(\underline{u}{+\varepsilon )}^{\alpha _{1}}\underline{v}^{\beta _{1}}+\rho
\max \{(\underline{u}+\varepsilon )^{\alpha _{1}-1}R^{\beta _{1}},\underline{%
u}^{p-1}\}\text{ \ in }\Omega \backslash \overline{\Omega }_{\delta },%
\end{array}
\label{13**}
\end{equation}%
for all $t\in \lbrack 0,1]$ and for all $\varepsilon \in (0,1)$. On another
hand, by (\ref{c2}), (\ref{h1}), (\ref{30}), (\ref{256}) and (\ref{23}), we
obtain%
\begin{equation}
\begin{array}{l}
(\underline{u}{+\varepsilon )}^{\alpha _{1}}\underline{v}^{\beta
_{1}}=(t+1-t)(\underline{u}{+\varepsilon )}^{\alpha _{1}}\underline{v}%
^{\beta _{1}} \\ 
\leq t(\underline{u}{+\varepsilon )}^{\alpha _{1}}\underline{v}^{\beta
_{1}}+(1-t)(C^{-1}\frac{c_{0}}{2}\phi _{1,p})^{\alpha
_{1}}(C^{-1}c_{1}^{\prime }\phi _{1,q})^{\beta _{1}} \\ 
\leq t(\underline{u}{+\varepsilon )}^{\alpha _{1}}\underline{v}^{\beta
_{1}}+(1-t)(C^{-1}\frac{c_{0}}{2}\mu )^{\alpha _{1}}(C^{-1}c_{1}^{\prime
}M)^{\beta _{1}} \\ 
\leq t(\underline{u}{+\varepsilon )}^{\alpha _{1}}\underline{v}^{\beta
_{1}}+(1-t)m(C^{-1}\frac{c_{0}}{2}\mu )^{p-1} \\ 
\leq t(\underline{u}{+\varepsilon )}^{\alpha _{1}}\underline{v}^{\beta
_{1}}+(1-t){m}\underline{u}^{p-1}\text{ \ in }\Omega \backslash \overline{%
\Omega }_{\delta },%
\end{array}
\label{13***}
\end{equation}%
provided that $m>0$ sufficiently large, for all $t\in \lbrack 0,1]$ and all $%
\varepsilon \in (0,1)$. Combining (\ref{13**}) with (\ref{13***}) and using
the monotonicity of ${f_{1,\varepsilon ,t}}$, one gets%
\begin{equation}
\begin{array}{l}
f(x)=C^{-(p-1)}\phi _{1,p}^{\alpha _{1}}(x)+\rho \max \{(\underline{u}%
+\varepsilon )^{\alpha _{1}-1}R^{\beta _{1}},\underline{u}^{p-1}\} \\ 
<{f_{1,\varepsilon ,t}}({x,}\underline{u},\underline{v})\leq {%
f_{1,\varepsilon ,t}}({x,}\widetilde{u},\widetilde{v})=g(x)\text{ \ in }%
\Omega \backslash \overline{\Omega }_{\delta }%
\end{array}
\label{10*}
\end{equation}%
for all $t\in \lbrack 0,1]$ and all $\varepsilon \in (0,1)$. Consequently,
it follows from (\ref{10}) and (\ref{10*}) that for each compact set $%
K\subset \subset \Omega ,$ there is a constant $\tau =\tau (K)>0$ such that%
\begin{equation*}
\begin{array}{l}
f(x)+\tau =-C^{-(p-1)}\phi _{1,p}^{\alpha _{1}}(x)+\rho \max \{(\underline{u}%
+\varepsilon )^{\alpha _{1}-1}R^{\beta _{1}},\underline{u}^{p-1}\}+\tau \\ 
\leq {f_{1,\varepsilon ,t}}({x,}\widetilde{u},\widetilde{v})=g(x)\text{ \
a.e. in }K\cap \Omega _{\delta }%
\end{array}%
\end{equation*}%
and 
\begin{equation*}
\begin{array}{l}
f(x)+\tau =C^{-(p-1)}\phi _{1,p}^{\alpha _{1}}(x)+\rho \max \{(\underline{u}%
+\varepsilon )^{\alpha _{1}-1}R^{\beta _{1}},\underline{u}^{p-1}\}+\tau \\ 
\leq {f_{1,\varepsilon ,t}}({x,}\widetilde{u},\widetilde{v})=g(x)\text{ \
a.e. in }K\cap \Omega \backslash \overline{\Omega }_{\delta },%
\end{array}%
\end{equation*}%
for all $t\in \lbrack 0,1]$ and all $\varepsilon \in (0,1)$. Hence, given a
compact set $k\subset \subset \Omega $, there is $\tau >0$ such that 
\begin{equation*}
f(x)+\tau \leq g(x),\quad \forall x\in K
\end{equation*}%
and so, $f\prec g$ and $f,g\in L_{loc}^{\infty }(\Omega )$. Thereby, by the
strong comparison principle (see Appendix, Proposition \ref{P0}), we infer
that 
\begin{equation*}
u(x)\gg \underline{u}(x),\quad \forall x\in \Omega .
\end{equation*}%
The proof of the second inequality in (\ref{16}) is carried out in a similar
way. This complete the proof.
\end{proof}

\begin{proposition}
\label{P6}Under the assumption (\ref{h1}) problem (\ref{50}) has no
solutions for $t=0$.
\end{proposition}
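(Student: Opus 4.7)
The plan is to argue by contradiction and reduce problem (\ref{50}) at $t=0$ to the eigenvalue problems for $-\Delta_p$ and $-\Delta_q$, then use the fact that $m$ was chosen strictly larger than the principal eigenvalues.

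Suppose $(u,v)$ is a solution of (\ref{50}) at $t=0$. First I would invoke Proposition \ref{P2} to deduce that
\begin{equation*}
\underline{u}(x) \ll u(x), \quad \underline{v}(x) \ll v(x) \text{ in } \Omega, \quad \|u\|_{C^{1,\gamma}(\overline{\Omega})}, \|v\|_{C^{1,\gamma}(\overline{\Omega})} < R.
\end{equation*}
In particular, the truncations defined in (\ref{5**}) are trivial on $(u,v)$, i.e.\ $\widetilde{u}=u$ and $\widetilde{v}=v$ pointwise in $\overline{\Omega}$.

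Next, setting $t=0$ in the definitions (\ref{15**}) and (\ref{15*}) and substituting $\widetilde{u}=u$, $\widetilde{v}=v$, the right-hand sides of the two equations in (\ref{50}) reduce to
\begin{equation*}
f_{1,\varepsilon,0}(x,u,v) = m u^{p-1} + \rho\max\{(\underline{u}+\varepsilon)^{\alpha_1-1} R^{\beta_1}, u^{p-1}\},
\end{equation*}
\begin{equation*}
f_{2,\varepsilon,0}(x,u,v) = m v^{q-1} + \rho\max\{R^{\alpha_2}(\underline{v}+\varepsilon)^{\beta_2-1}, v^{q-1}\}.
\end{equation*}
The $\rho$-terms on the right cancel with the $\rho$-terms inside $T_{p,\varepsilon}$ and $T_{q,\varepsilon}$, and the system collapses to
\begin{equation*}
-\Delta_p u = m u^{p-1}, \qquad -\Delta_q v = m v^{q-1} \quad \text{in } \Omega,
\end{equation*}
with $u,v>0$ in $\Omega$ and $u=v=0$ on $\partial\Omega$.

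This is the decisive step: since $u$ is a positive weak solution of the $p$-Laplacian eigenvalue problem with eigenvalue $m$, and $\lambda_{1,p}$ is the only eigenvalue of $-\Delta_p$ admitting a positive eigenfunction (an elementary consequence of the Picone-type identity / the characterization of $\lambda_{1,p}$), we must have $m=\lambda_{1,p}$. Analogously, the $v$-equation forces $m=\lambda_{1,q}$. Both conclusions contradict the choice $m>\max\{\lambda_{1,p},\lambda_{1,q}\}$ made right after (\ref{15*}). Hence no solution of (\ref{50}) at $t=0$ exists. No serious obstacle is expected; the only subtle point is verifying that the a priori bounds and the strict positivity in Proposition \ref{P2} apply uniformly for $t=0$, which they do since Proposition \ref{P2} is stated for all $t\in[0,1]$.
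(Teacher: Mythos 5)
Your proposal is correct, and the reduction step (use Proposition \ref{P2} to get $\underline{u}\ll u$, $\underline{v}\ll v$ and $\|u\|_{\infty},\|v\|_{\infty}<R$, so that $\widetilde{u}=u$, $\widetilde{v}=v$, the $\rho$-terms cancel, and the system collapses to $-\Delta_p u=mu^{p-1}$, $-\Delta_q v=mv^{q-1}$) is exactly the same implicit reduction the paper performs. Where you diverge is in how the contradiction is extracted from this eigenvalue-type problem. The paper does not invoke the fact that $\lambda_{1,p}$ is the only eigenvalue with a positive eigenfunction; instead it sets $u_1=C^{-1}\tfrac{c_0}{2}\phi_{1,p}\le \underline u\le u^{*}$, fixes $\lambda_\delta=\lambda_{1,p}+\delta$ with $\delta$ small, and runs a monotone iteration $-\Delta_p u_n=\lambda_\delta u_{n-1}^{p-1}$, using the weak comparison principle and $\lambda_\delta u_{n-1}^{p-1}\le m(u^{*})^{p-1}$ to get an increasing sequence $u_1\le u_n\le u^{*}$ whose limit is a positive eigenfunction for $\lambda_\delta$; this contradicts the isolation of $\lambda_{1,p}$. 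Your argument instead observes that $u$ itself is already a positive Dirichlet eigenfunction of $-\Delta_p$ with eigenvalue $m$, and applies the Picone/Anane--Lindqvist characterization that $\lambda_{1,p}$ is the unique eigenvalue admitting a positive eigenfunction, contradicting $m>\lambda_{1,p}$. Both facts (isolation, and uniqueness of the eigenvalue with signed eigenfunctions) are standard for the $p$-Laplacian on a bounded (connected) domain, and your route is shorter since it dispenses with the iteration and the comparison principle entirely; the paper's route only needs the weaker isolation statement plus weak comparison. Either way the argument is sound, so I see no gap.
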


\begin{proof}
Arguing by contradiction, let $(u^{\ast },v^{\ast })\in C^{1,\gamma }(%
\overline{\Omega })\times C^{1,\gamma }(\overline{\Omega }),$ for certain $%
\gamma \in (0,1)$, be a nontrivial (positive) solution of (\ref{50}) with 
\begin{equation}
(u^{\ast },v^{\ast })\in \mathcal{O}_{R}\text{ \ and }t=0.  \label{49}
\end{equation}%
From (\ref{c2}) and (\ref{30}) 
\begin{equation*}
\begin{array}{l}
\underline{u}(x)=C^{-1}z_{1}(x)\geq C^{-1}\frac{c_{0}}{2}\phi _{1,p}(x)\text{
in }\Omega \text{.}%
\end{array}%
\end{equation*}%
In the sequel, we fix $u_{1}=C^{-1}\frac{c_{0}}{2}\phi _{1,p}$ and take $%
\lambda _{\delta }=\lambda _{1,p}+\delta $ for $\delta >0$. Let $u_{2}\in
C_{0}^{1}(\overline{\Omega })$ be the solution of the problem%
\begin{equation*}
\left\{ 
\begin{array}{l}
-\Delta _{p}u_{2}=\lambda _{\delta }u_{1}^{p-1}\text{in }\Omega , \\ 
u_{2}=0\text{ on }\partial \Omega .%
\end{array}%
\right.
\end{equation*}%
Then for $\delta >0$ small and $m$ large enough, we have 
\begin{equation*}
-\Delta _{p}u_{2}=\lambda _{\delta }u_{1}^{p-1}\leq m\widetilde{u}%
^{p-1}=-\Delta _{p}u^{\ast }
\end{equation*}%
and 
\begin{equation*}
-\Delta _{p}u_{1}=\lambda _{1,p}u_{1}^{p-1}\leq \lambda _{\delta
}u_{1}^{p-1}=-\Delta _{p}u_{2}.
\end{equation*}%
By the weak comparison principle we get%
\begin{equation*}
u_{1}\leq u_{2}\leq u^{\ast }\text{ in }\Omega \text{.}
\end{equation*}%
Now let us consider the solutions of the problems%
\begin{equation*}
\left\{ 
\begin{array}{ll}
-\Delta _{p}u_{n}=\lambda _{\delta }u_{n-1}^{p-1} & \text{ in }\Omega , \\ 
u_{n}=0 & \text{ on }\partial \Omega .%
\end{array}%
\right.
\end{equation*}%
We obtain an increasing sequence $\{u_{n}\}$ such that%
\begin{equation*}
u_{1}\leq u_{n-1}\leq u_{n}\leq u^{\ast }\text{ in }\Omega \text{.}
\end{equation*}%
Passing to the limit we get a positive solution $u\in W_{0}^{1,p}(\Omega )$
for problem%
\begin{equation*}
\left\{ 
\begin{array}{ll}
-\Delta _{p}u=\lambda _{\delta }u^{p-1} & \text{ in }\Omega , \\ 
u=0 & \text{ on }\partial \Omega ,%
\end{array}%
\right.
\end{equation*}%
which is impossible for $\delta >0$ small enough because the first
eigenvalue for $p$-Laplacian is isolate. Hence, problem (\ref{50}) has no
solutions for $t=0$.
\end{proof}

Define the homotopy $\mathcal{H}_{\varepsilon }$ on $\left[ 0,1\right]
\times C^{1}(\overline{\Omega })\times C^{1}(\overline{\Omega })$ by%
\begin{equation*}
\mathcal{H}_{\varepsilon }(t,u,v)=I(u,v)-\left( 
\begin{array}{cc}
T_{p,\varepsilon }^{-1} & 0 \\ 
0 & T_{q,\varepsilon }^{-1}%
\end{array}%
\right) \times \left( 
\begin{array}{l}
{f_{1,\varepsilon ,t}}({x,}\widetilde{{u}}{,}\widetilde{{v}}) \\ 
\multicolumn{1}{c}{f{_{2,\varepsilon ,t}}({x,}\widetilde{{u}}{,}\widetilde{{v%
}})}%
\end{array}%
\right) .
\end{equation*}%
According to Lemma \ref{L1} (see Appendix) and because functions ${%
f_{\varepsilon ,t}}$ and ${g_{\varepsilon ,t}}$ belong to $C(\overline{%
\Omega })$ for all $x\in \overline{\Omega }$ and all $\varepsilon \in (0,1),$
$\mathcal{H}_{\varepsilon }$ is well defined. Furthermore, $\mathcal{H}%
_{\varepsilon }:\left[ 0,1\right] \times C^{1}(\overline{\Omega })\times
C^{1}(\overline{\Omega })\rightarrow C(\overline{\Omega })\times C(\overline{%
\Omega })$ is completely continuous for all $\varepsilon \in (0,1)$. This is
due to the compactness of the operators $T_{p,\varepsilon
}^{-1},T_{q,\varepsilon }^{-1}:C(\overline{\Omega })\rightarrow C^{1}(%
\overline{\Omega }),$ for all $\varepsilon \in (0,1)$, see appendix for more
details. Hence, $(u,v)\in \mathcal{O}_{R}$ is a solution for (\ref{pr}) if,
and only if, 
\begin{equation*}
\begin{array}{c}
(u,v)\in \mathcal{O}_{R}\,\,\,\mbox{and}\,\,\,\mathcal{H}_{\varepsilon
}(1,u,v)=0.%
\end{array}%
\end{equation*}

From the previous Proposition \ref{P2} and since $R$ is the a strict a
priori bound, it is clear that solutions of (\ref{50}) must lie in $\mathcal{%
O}_{R}$. Thus, the fact that problem (\ref{50}) has no solutions for $t=0$
(see proposition \ref{P6}) implies that 
\begin{equation*}
\deg \left( \mathcal{H}_{\varepsilon }(0,\cdot ,\cdot ),\mathcal{O}%
_{R},0\right) =0\text{\ \ for all }\varepsilon \in (0,1).
\end{equation*}%
Consequently, from the homotopy invariance property, it follows that 
\begin{equation}
\begin{array}{c}
\deg \left( \mathcal{H}_{\varepsilon }(1,\cdot ,\cdot ),\mathcal{O}%
_{R},0\right) =\deg \left( \mathcal{H}_{\varepsilon }(0,\cdot ,\cdot ),%
\mathcal{O}_{R},0\right) =0\text{ for all }\varepsilon \in (0,1).%
\end{array}
\label{35}
\end{equation}

\vspace{0.5cm}

\noindent \textbf{Topological degree: The second estimate.}

\vspace{0.5 cm}

We show that the degree of an operator corresponding to the system (\ref{pr}%
) is $1$ on the set $\mathcal{\hat{O}}$. To this end, we modify the problem
to ensure that solutions cannot occur outside of the rectangle formed by $(%
\underline{u},\underline{v})$ and $(\hat{u},\hat{v})$. Set 
\begin{equation}
\widetilde{u}=\left\{ 
\begin{array}{l}
\hat{u}\text{ if }u\geq \hat{u} \\ 
u\text{ if }\underline{u}\leq u\leq \hat{u} \\ 
\underline{u}\text{ if }u\leq \underline{u}%
\end{array}%
\right. ,\text{ \ }\widetilde{v}=\left\{ 
\begin{array}{l}
\hat{v}\text{ if }v\geq \hat{v} \\ 
v\text{ if }\underline{v}\leq v\leq \hat{v} \\ 
\underline{v}\text{ if }v\leq \underline{v},%
\end{array}%
\right.  \label{5*}
\end{equation}%
and let us define the truncation problem%
\begin{equation}
\left\{ 
\begin{array}{ll}
T_{p,\varepsilon }(u)=g_{1,\varepsilon ,t}(x,u,v) & \text{in }\Omega , \\ 
T_{q,\varepsilon }(v)=g_{2,\varepsilon ,t}(x,u,v) & \text{in }\Omega , \\ 
u,v>0\text{ in }\Omega , &  \\ 
u,v=0\text{ \ on }\partial \Omega , & 
\end{array}%
\right.  \tag{$P_{g}$}  \label{2.7}
\end{equation}%
with%
\begin{equation*}
\begin{array}{c}
g_{1,\varepsilon ,t}(x,u,v)=t(\widetilde{u}+\varepsilon )^{\alpha _{1}}%
\widetilde{v}^{\beta _{1}}+(1-t)\eta (\phi _{1,p}+\varepsilon )^{\alpha _{1}}
\\ 
+\rho \max \{(\underline{u}+\varepsilon )^{\alpha _{1}-1}R^{\beta _{1}},%
\widetilde{u}^{p-1}\},%
\end{array}%
\end{equation*}%
\begin{equation*}
\begin{array}{c}
g_{2,\varepsilon ,t}(x,u,v)=t\widetilde{u}^{\alpha _{2}}(\widetilde{v}%
+\varepsilon )^{\beta _{2}}+(1-t)\eta (\phi _{1,q}+\varepsilon )^{\beta _{2}}
\\ 
+\rho \max \{R^{\alpha _{2}}(\underline{v}+\varepsilon )^{\beta _{2}-1},%
\widetilde{v}^{q-1}\},%
\end{array}%
\end{equation*}%
with a constant $\eta >0$. The constant $\rho >0$ is chosen sufficiently
large so that the following inequalities are satisfy:{%
\begin{equation*}
\begin{array}{c}
\alpha _{1}(s_{1}{+\varepsilon )}^{\alpha _{1}-1}s_{2}^{\beta _{1}}+\rho
\max \{(\underline{u}{+\varepsilon )}^{\alpha _{1}-1}R^{\beta
_{1}},(p-1)s_{1}^{p-2}\}\geq 0,%
\end{array}%
\end{equation*}%
} uniformly in $x\in \Omega $, for $(s_{1},s_{2})\in \lbrack \underline{u},%
\hat{u}]\times \lbrack \underline{v},\hat{v}],$ for $\varepsilon \in (0,1),$
and{%
\begin{equation*}
\begin{array}{c}
\beta _{2}s_{1}^{\alpha _{2}}(s_{2}{+\varepsilon )}^{\beta _{2}-1}+\rho \max
\{R^{\alpha _{2}}(\underline{v}{+\varepsilon )}^{\beta
_{2}-1},(q-1)s_{2}^{q-2}\}\geq 0,\text{ }%
\end{array}%
\end{equation*}%
} uniformly in $x\in \Omega ,$ for $(s_{1},s_{2})\in \lbrack \underline{u},%
\hat{u}]\times \lbrack \underline{v},\hat{v}],$ for $\varepsilon \in (0,1)$.

We state the following result regarding truncation system (\ref{2.7}).

\begin{proposition}
\label{P3}Under condition (\ref{h1}) every solution $(u,v)$ of (\ref{2.7})
is in $C^{1,\gamma }(\overline{\Omega })\times C^{1,\gamma }(\overline{%
\Omega })$ for certain $\gamma \in (0,1),$ with $\left\Vert u\right\Vert
_{C^{1,\gamma }},\left\Vert v\right\Vert _{C^{1,\gamma }}<R$ and satisfies 
\begin{equation}
\begin{array}{c}
\underline{u}(x)\ll u(x)\ll \hat{u}(x)\text{ and }\underline{v}(x)\ll
v(x)\ll \hat{v}(x), \quad \forall x\in \Omega .%
\end{array}
\label{1*}
\end{equation}
\end{proposition}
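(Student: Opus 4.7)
My plan is to mirror the three-step argument of Proposition \ref{P2} applied to system (\ref{2.7}), the only genuinely new content being the upper strict bounds $u \ll \hat{u}$, $v \ll \hat{v}$ (which are needed so that any solution of (\ref{2.7}) actually lies in $\mathcal{\hat{O}}$). For regularity, observe that the right-hand sides of (\ref{2.7}) depend on $(u,v)$ only through the truncations $\widetilde{u}\in[\underline{u},\hat{u}]$, $\widetilde{v}\in[\underline{v},\hat{v}]$ and the frozen corrector $\eta(\phi_{1,p}+\varepsilon)^{\alpha_1}$; the only singular contribution near $\partial\Omega$ is the $\rho\max$ term. Using (\ref{c2}), (\ref{30}) and (\ref{67}), I obtain, exactly as in (\ref{70})--(\ref{70*}), estimates $g_{1,\varepsilon,t}\le C_1 d(x)^{\alpha_1}$ and $g_{2,\varepsilon,t}\le C_2 d(x)^{\beta_2}$, so the singular regularity theorem of \cite{Hai} yields $(u,v)\in C^{1,\gamma}(\overline\Omega)\times C^{1,\gamma}(\overline\Omega)$, and the a priori assumption (\ref{15}) furnishes the strict $C^{1,\gamma}$-bound by $R$.

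\textbf{Lower strict inequalities.} I would repeat verbatim the argument of Proposition \ref{P2}, with
\[
f(x) := C^{-(p-1)} h_1(x) + \rho\max\{(\underline{u}+\varepsilon)^{\alpha_1-1}R^{\beta_1},\underline{u}^{p-1}\} = T_{p,\varepsilon}(\underline{u})(x)
\]
and $g(x) := g_{1,\varepsilon,t}(x,u,v)$. The strict inequalities (\ref{18}) and (\ref{29*}), together with the monotonicity of $g_{1,\varepsilon,t}$ in its second slot (guaranteed by the choice of $\rho$), give $f\prec g$ on compact subsets of $\Omega$; the new nonnegative corrector $(1-t)\eta(\phi_{1,p}+\varepsilon)^{\alpha_1}$ only reinforces this. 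The strong comparison Proposition \ref{P0} then produces $\underline{u}\ll u$, and an analogous argument gives $\underline{v}\ll v$.

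\textbf{Upper strict inequalities and main obstacle.} Here the comparison must be run in the opposite direction. Since $t\le 1$, $\widetilde{u}\le\hat{u}$, $\widetilde{v}\le\hat{v}$ and $\alpha_1<0<\beta_1$, one has the pointwise majoration
\[
g_{1,\varepsilon,t}(x,u,v) \le \underline{u}^{\alpha_1}\hat{v}^{\beta_1} + \eta(\phi_{1,p}+\varepsilon)^{\alpha_1} + \rho\max\{(\underline{u}+\varepsilon)^{\alpha_1-1}R^{\beta_1},\hat{u}^{p-1}\}.
\]
On the other hand, (\ref{3}), (\ref{20}) and (\ref{21}) give $T_{p,\varepsilon}(\hat{u}) = \Lambda^{p-1}w_1^{\alpha_1} + \rho\max\{(\underline{u}+\varepsilon)^{\alpha_1-1}R^{\beta_1},\hat{u}^{p-1}\}$ with $w_1^{\alpha_1}\ge c_3^{\alpha_1}\phi_{1,p}^{\alpha_1}$. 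Matching leading asymptotics as in (\ref{69}), enlarging $\Lambda$ (independently of $\varepsilon,t$) so that the $\Lambda^{p-1-\beta_1}$ contribution from $-\Delta_p\hat u$ dominates the bounded factors $c_0^{\alpha_1}(c_1'M)^{\beta_1}$ arising from $\underline{u}^{\alpha_1}\hat{v}^{\beta_1}$, yields $T_{p,\varepsilon}(\hat u) > g_{1,\varepsilon,t}(x,u,v)$ strictly and uniformly on compact subsets of $\Omega$; this is where assumption (\ref{h1}) is used in the form $\beta_1<p-1$. Proposition \ref{P0} then gives $u\ll\hat u$, and the analogous computation (using $\alpha_2<q-1$) yields $v\ll\hat v$. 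The chief obstacle is precisely this uniformity-in-$(\varepsilon,t)$ of the strict majoration at the boundary, but the $\Lambda$ already fixed in Proposition \ref{P1} needs only be enlarged by a constant depending on $\eta,\rho,R,M,c_0,c_1'$ to absorb the new terms.
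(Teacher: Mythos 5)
Your proposal is correct and, for the parts the paper actually writes out (regularity via the $d(x)^{\alpha_1}$, $d(x)^{\beta_2}$ bounds and \cite{Hai}, and the lower bounds via $f\prec \tilde g$ and the strong comparison Proposition \ref{P0}), it coincides with the paper's argument. Where you genuinely add something is the upper bound: the paper only writes out $\underline{u}\ll u$ and dismisses the rest of (\ref{1*}) as ``similar,'' whereas you make the missing comparison explicit, namely $T_{p,\varepsilon}(\hat u)>g_{1,\varepsilon,t}(x,u,v)$ after cancelling the common $\rho\max$ contribution (legitimate since $\widetilde u\le\hat u$ makes the truncated $\rho\max$ term the smaller one) and then matching the $\phi_{1,p}^{\alpha_1}$ asymptotics so that $\Lambda^{p-1}c_3^{\alpha_1}$ beats $K\Lambda^{\beta_1}+\eta c_3^{-\alpha_1}$, which is exactly where $\beta_1<p-1$ from (\ref{h1}) enters; this is the correct adaptation of (\ref{69}) to the operator $T_{p,\varepsilon}$ and it is uniform in $t$ and $\varepsilon$, as needed. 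Your remark that $\Lambda$ can be enlarged once and for all after $\eta$, $\rho$, $R$ are fixed is also the right way to avoid circularity.

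One imprecision in your lower-bound step: the new corrector $(1-t)\eta(\phi_{1,p}+\varepsilon)^{\alpha_1}$ does not merely ``reinforce'' the inequality $f\prec g$ — for $t$ near $0$ the singular term $t(\underline u+\varepsilon)^{\alpha_1}\underline v^{\beta_1}$ is damped, so in $\Omega\setminus\overline{\Omega}_\delta$ the strict inequality $C^{-(p-1)}\phi_{1,p}^{\alpha_1}<t(\underline u+\varepsilon)^{\alpha_1}\underline v^{\beta_1}+(1-t)\eta(\phi_{1,p}+\varepsilon)^{\alpha_1}$ requires $\eta$ to be chosen large (this is the content of (\ref{14}), playing the role that (\ref{13***}) with $m$ large played in Proposition \ref{P2}). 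Since you announce that you repeat the Proposition \ref{P2} argument, the fix is routine, but the dependence on $\eta$ large should be stated rather than attributed to nonnegativity alone.
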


\begin{proof}
A quite similar argument as in the proof of Proposition \ref{P2} provides
that all solutions of (\ref{2.7}) are in $C^{1,\gamma }(\overline{\Omega }%
)\times C^{1,\gamma }(\overline{\Omega })$ for certain $\gamma \in (0,1)$.

Let us prove (\ref{1*}). We only show the first part of inequalities in (\ref%
{1*}) because the second part can be justified similarly. To this end, we
set the functions $f,\tilde{g}:\Omega \rightarrow \mathbb{R}$ given by 
\begin{equation*}
f(x)=C^{-(p-1)}h_{1}(x)+\rho \max \{(\underline{u}+\varepsilon )^{\alpha
_{1}-1}R^{\beta _{1}},\underline{u}^{p-1}\}
\end{equation*}%
and 
\begin{equation*}
\tilde{g}(x)=g_{1,\varepsilon ,t}({x,}\widetilde{u},\widetilde{v}).
\end{equation*}%
From Remark \ref{R3}, (\ref{c2}) and (\ref{23}), for all $\varepsilon \in
(0,1)$ and for all $t\in \lbrack 0,1]$, that%
\begin{equation}
\begin{array}{l}
(t+1-t)(\underline{u}{+\varepsilon )}^{\alpha _{1}}\underline{v}^{\beta _{1}}
\\ 
\leq t(\underline{u}{+\varepsilon )}^{\alpha _{1}}\underline{v}^{\beta
_{1}}+(1-t)(C^{-1}\frac{c_{0}}{2}\phi _{1,p}+\varepsilon )^{\alpha
_{1}}(C^{-1}c_{1}^{\prime }\phi _{1,q})^{\beta _{1}} \\ 
\leq t(\underline{u}{+\varepsilon )}^{\alpha _{1}}\underline{v}^{\beta
_{1}}+(1-t)(C^{-1}\frac{c_{0}}{2}\phi _{1,p})^{\alpha
_{1}}(C^{-1}c_{1}^{\prime }M)^{\beta _{1}} \\ 
t(\underline{u}{+\varepsilon )}^{\alpha _{1}}\underline{v}^{\beta
_{1}}+(1-t)\eta (\phi _{1,p}+\varepsilon )^{\alpha _{1}}\text{ in }\Omega
\backslash \overline{\Omega }_{\delta }%
\end{array}
\label{14}
\end{equation}%
provided that $\eta >0$ is sufficiently large. Then, following the quite
similar argument which proves (\ref{16}) in Proposition \ref{P2}, we obtain
for each compact set $K\subset \Omega ,$ there is a constant $\tau =\tau
(K)>0$ such that 
\begin{equation*}
f(x)+\tau \leq \tilde{g}(x)\quad \mbox{a.e in }\quad \Omega .
\end{equation*}%
Hence, $f\prec \tilde{g}$ and $f,\tilde{g}\in L_{loc}^{\infty }(\Omega )$.
Thereby, by the strong comparison principle (see Proposition \ref{P0} in
Appendix) we infer that 
\begin{equation*}
u(x)\gg \underline{u}(x)\quad \forall x\in \Omega .
\end{equation*}
\end{proof}

Let us define the homotopy $\mathcal{N}_{\varepsilon }$ on $\left[ 0,1\right]
\times C^{1}(\overline{\Omega })\times C^{1}(\overline{\Omega })$ by%
\begin{equation}
\mathcal{N}_{\varepsilon }(t,u,v)=I(u,v)-\left( 
\begin{array}{cc}
T_{p,\varepsilon }^{-1} & 0 \\ 
0 & T_{q,\varepsilon }^{-1}%
\end{array}%
\right) \times \left( 
\begin{array}{l}
g_{1,\varepsilon ,t}({x,}u,v) \\ 
\multicolumn{1}{c}{g_{2,\varepsilon ,t}({x,}u,v)}%
\end{array}%
\right) .  \label{17}
\end{equation}%
Clearly, Lemma \ref{L1} together with Proposition \ref{P4} (see Appendix)
imply that $\mathcal{N}_{\varepsilon }$ is well defined and completely
continuous homotopy for all $\varepsilon \in (0,1)$ and all $t\in \lbrack
0,1]$. Moreover, $(u,v)\in \mathcal{\hat{O}}$ is a solution of system (\ref%
{pr}) if, and only if, 
\begin{equation*}
\begin{array}{c}
(u,v)\in \mathcal{\hat{O}}\,\,\,\mbox{and}\,\,\,\mathcal{N}_{\varepsilon
}(1,u,v)=0\text{ for all }\varepsilon \in (0,1).%
\end{array}%
\end{equation*}

In view of Proposition \ref{P3} and from the definition of function\ $\hat{u}
$ and $\hat{v}$ it follows that all solutions of (\ref{2.7}) are also
solutions of (\ref{pr}). Moreover, these solutions must be in the set $%
\mathcal{\hat{O}}$. Moreover, for $t=0$ in (\ref{17}), Minty-Browder Theorem
together with Hardy-Sobolev Inequality and \cite[Lemma 3.1]{Hai} ensure that
problems 
\begin{equation*}
\left\{ 
\begin{array}{ll}
-\Delta _{p}u=\eta (\phi _{1,p}+\varepsilon )^{\alpha _{1}} & \text{in }%
\Omega \\ 
u=0 & \text{on }\partial \Omega%
\end{array}%
\right. \text{ \ and \ }\left\{ 
\begin{array}{ll}
-\Delta _{q}v=\eta (\phi _{1,q}+\varepsilon )^{\beta _{2}} & \text{in }\Omega
\\ 
v=0 & \text{on }\partial \Omega ,%
\end{array}%
\right.
\end{equation*}%
admit unique positive solutions $\grave{u}_{\varepsilon }$ and $\grave{v}%
_{\varepsilon }$ in $C^{1,\gamma }(\overline{\Omega })$ for certain $\gamma
\in (0,1)$ and for $\varepsilon \in (0,1)$, respectively. Then, the homotopy
invariance property of the degree gives 
\begin{equation}
\begin{array}{ll}
\deg (\mathcal{N}_{\varepsilon }(1,\cdot ,\cdot ),\mathcal{\hat{O}},0) & 
=\deg (\mathcal{N}_{\varepsilon }(0,\cdot ,\cdot ),\mathcal{\hat{O}},0) \\ 
& =\deg (\mathcal{N}_{\varepsilon }(0,\cdot ,\cdot ),B_{R}(0)),0) \\ 
& =1.%
\end{array}
\label{55}
\end{equation}%
Since 
\begin{equation*}
\mathcal{H}_{\varepsilon }(1,\cdot ,\cdot )=\mathcal{N}_{\varepsilon
}(1,\cdot ,\cdot )\,\,\,\text{in}\,\,\,\mathcal{\hat{O}},
\end{equation*}%
it follows that 
\begin{equation}
\begin{array}{c}
\deg (\mathcal{H}_{\varepsilon }(1,\cdot ,\cdot ),\mathcal{\hat{O}},0)=1,%
\end{array}
\label{56}
\end{equation}%
for all $\varepsilon \in (0,1)$.

\vspace{0.5cm}

\noindent \textbf{Topological degree: The third estimate.}

\bigskip

Herafter, we will assume that 
\begin{equation*}
\mathcal{H}_{\varepsilon }(1,u,v) \not=0 \,\,\,\, \forall (u,v) \in \partial 
\mathcal{\hat{O}},
\end{equation*}
otherwise we will have a solution $(\breve{u}_{\varepsilon },\breve{v}%
_{\varepsilon }) \in \partial \mathcal{\hat{O}}$, which is different from
the solution $(u,v)$ in Theorem \ref{T1}, because $(u,v) \in \mathcal{\hat{O}%
}$. Here, we have used that $\mathcal{\hat{O}}$ is an open set, then $(u,v)
\notin \partial \mathcal{\hat{O}}$.

By (\ref{55}), (\ref{56}) and (\ref{35}), we deduce from the excision
property of Leray-Schauder degree that%
\begin{equation*}
\begin{array}{c}
\deg (\mathcal{H}_{\varepsilon }(1,\cdot ,\cdot ),\mathcal{O}_{R}\backslash 
\overline{\mathcal{\hat{O}}},0)={-1}%
\end{array}%
\end{equation*}%
and thus problem (\ref{pr}) has a solution $(\breve{u}_{\varepsilon },\breve{%
v}_{\varepsilon })\in C^{1,\gamma }(\overline{\Omega })\times C^{1,\gamma }(%
\overline{\Omega })$ for some $\gamma \in (0,1)$ with 
\begin{equation}
(\breve{u}_{\varepsilon },\breve{v}_{\varepsilon })\in \mathcal{O}%
_{R}\backslash \overline{\mathcal{\hat{O}}}  \label{61}
\end{equation}%
In view of remark (\ref{R3}), $(\breve{u}_{\varepsilon },\breve{v}%
_{\varepsilon })$ is necessarily another solution for problem (\ref{pr}).

\vspace{0.5cm}

\noindent \textbf{Proof of Theorem \ref{T2}:}

\bigskip

Set $\varepsilon =\frac{1}{n}$ with any positive integer $n\geq 1$. From (%
\ref{61}) with $\varepsilon =\frac{1}{n}$, we know that there exist $(\breve{%
u}_{n},\breve{v}_{n}):=(\breve{u}_{\frac{1}{n}},\breve{v}_{\frac{1}{n}})$
bounded in $C^{1,\gamma }(\overline{\Omega })\times C^{1,\gamma }(\overline{%
\Omega })$ for some $\gamma \in (0,1)$ such that%
\begin{equation}
\left\{ 
\begin{array}{l}
-\Delta _{p}\breve{u}_{n}=\left( \breve{u}_{n}+\frac{1}{n}\right) ^{\alpha
_{1}}\breve{v}_{n}^{\beta _{1}}\text{ in }\Omega , \\ 
-\Delta _{q}\breve{v}_{n}=\breve{u}_{n}^{\alpha _{2}}\left( \breve{v}_{n}+%
\frac{1}{n}\right) ^{\beta _{2}}\text{ in }\Omega , \\ 
\breve{u}_{n}=\breve{v}_{n}=0\text{ on }\partial \Omega ,%
\end{array}%
\right.  \label{122*}
\end{equation}%
satisfying%
\begin{equation}
\begin{array}{c}
(\breve{u}_{n},\breve{v}_{n})\in \mathcal{O}_{R}\setminus \overline{\mathcal{%
\hat{O}}}\,\,\,\forall n\in \mathbb{N}.%
\end{array}
\label{62}
\end{equation}%
Employing Arzel\`{a}-Ascoli's theorem, we may pass to the limit in $C^{1}(%
\overline{\Omega })\times C^{1}(\overline{\Omega })$ and the limit functions 
$(\breve{u},\breve{v})\in C^{1}(\overline{\Omega })\times C^{1}(\overline{%
\Omega })$ satisfy (\ref{p}) with 
\begin{equation}
(\breve{u},\breve{v})\in \mathcal{O}_{R}\setminus \overline{\mathcal{\hat{O}}%
}  \label{63}
\end{equation}%
Finally, on account of (\ref{63}) and Proposition \ref{P1}, we achieve that $%
(\breve{u},\breve{v})$ is a second solution of problem (\ref{p}). This
complete the proof of Theorem \ref{T2}.

\section{Appendix}

In this section, we establish a version of the strong comparison principle
for the operators $T_{p,\varepsilon }$ and $T_{q,\varepsilon }$ introduced
in Section \ref{S3} and we study the compactness of the inverse of these
operators. We only prove the strong comparison principle for the operator $%
T_{p,\varepsilon }$ and the compactness of $T_{p,\varepsilon }^{-1}$ because
for $T_{q,\varepsilon }$ and $T_{q,\varepsilon }^{-1}$ the proof can be
justified similarly.

\bigskip

\noindent \textbf{1. Strong comparison principle.}

\begin{proposition}
\label{P0}Let $u_{1},u_{2}\in C^{1,\beta }(\overline{\Omega }),$ $\beta \in
(0,1),$ be the solutions of the problems%
\begin{equation*}
\left\{ 
\begin{array}{ll}
T_{p,\varepsilon }(u_{1})={f(x)} & \text{in }\Omega , \\ 
u_{1}=0 & \text{on }\partial \Omega ,%
\end{array}%
\right. \text{ and\ }\left\{ 
\begin{array}{ll}
T_{p,\varepsilon }(u_{2})={g(x)} & \text{in }\Omega , \\ 
u_{2}=0 & \text{on }\partial \Omega ,%
\end{array}%
\right.
\end{equation*}%
where 
\begin{equation*}
T_{p,\varepsilon }(u)=-\Delta _{p}u+\rho \max \{(\underline{u}+\varepsilon
)^{\alpha _{1}-1}R^{\beta _{1}},\left\vert u\right\vert ^{p-2}u\},
\end{equation*}%
for some $\varepsilon \in (0,1)$ and $f,g\in L_{loc}^{\infty }(\Omega )$. If 
$f\prec g$, that is, for each compact set $K\subset \Omega $, there is $\tau
=\tau (K)>0$ such that 
\begin{equation*}
f(x)+\tau \leq g(x)\quad \mbox{a.e in}\quad K,
\end{equation*}%
then $u_{1}\ll u_{2}$.
\end{proposition}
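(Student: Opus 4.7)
The plan is to prove $u_1\ll u_2$ in two stages: first the weak inequality $u_1\le u_2$ on $\overline{\Omega}$, then an upgrade to the strict pointwise and boundary inequalities defining $\ll$. The key structural observation is that
\[
\Phi(x,s):=\rho\max\bigl\{(\underline{u}(x)+\varepsilon)^{\alpha_1-1}R^{\beta_1},\,|s|^{p-2}s\bigr\}
\]
is nondecreasing and locally Lipschitz in $s$: $|s|^{p-2}s$ is strictly increasing and the maximum with a constant branch preserves monotonicity. Thus $T_{p,\varepsilon}=-\Delta_p+\Phi(x,\cdot)$ is a monotone perturbation of the $p$-Laplacian.

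For the weak comparison I would subtract the two equations and test with $(u_1-u_2)^{+}\in W_0^{1,p}(\Omega)$. Strict monotonicity of $-\Delta_p$ gives
\[
\int_{\Omega}\bigl(|\nabla u_1|^{p-2}\nabla u_1-|\nabla u_2|^{p-2}\nabla u_2\bigr)\cdot\nabla(u_1-u_2)^{+}\,dx\ge 0,
\]
and monotonicity of $\Phi(x,\cdot)$ produces a second nonnegative term. Since $f\prec g$ in particular implies $f\le g$ a.e., the right-hand side $\int_{\Omega}(f-g)(u_1-u_2)^{+}\,dx$ is nonpositive. Hence $(u_1-u_2)^{+}\equiv 0$, i.e.\ $u_1\le u_2$ in $\Omega$.

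For the strict upgrade, with $u_1\le u_2$ already in hand, I would appeal to the strong comparison principle of Arcoya and Ruiz \cite[Proposition 2.6]{AR}, already invoked in the proof of Proposition \ref{P1}. That principle is tailored precisely to operators of the form $-\Delta_p+h(x,\cdot)$ with $h$ nondecreasing in its second argument and with source terms satisfying $\prec$ on compacts of $\Omega$; its conclusion is that $u_1\ll u_2$ in the sense defined above, namely $u_2-u_1>0$ in $\Omega$ together with the strict boundary derivative inequality. Since $\Phi$ has exactly this structure and our data satisfy $f\prec g$ by hypothesis, the proposition applies essentially verbatim.

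The principal obstacle lies in the degeneracy of $-\Delta_p$ at points where $\nabla u_1=\nabla u_2=0$: a bare strong maximum principle for the $p$-Laplacian would fail to yield strict inequality there. The strict-gap hypothesis $f\prec g$, paired with the monotone zeroth-order perturbation $\Phi$, is what the Arcoya--Ruiz framework is designed to exploit in order to force a Hopf-type boundary estimate. One technical point to verify is the applicability of \cite[Proposition 2.6]{AR} with the non-smooth $\max$ appearing inside $\Phi$; if needed, one can approximate the $\max$ by a smooth monotone regularization, apply the principle to the regularized operator, and then pass to the limit using the $C^{1,\beta}$ regularity of $u_1,u_2$.
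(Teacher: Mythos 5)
Your proposal is correct and follows essentially the same route as the paper: both reduce the statement to the strong comparison principle of Arcoya--Ruiz \cite[Proposition 2.6]{AR}, treating the $\max$ term as a monotone zeroth-order perturbation of $-\Delta_p$. The only difference is cosmetic: where you propose smoothing the $\max$, the paper instead observes the elementary inequality $|\max\{a,b\}-\max\{c,d\}|\leq\max\{|a-c|,|b-d|\}$, which bounds the difference of the perturbation terms by $\bigl||u_1|^{p-2}u_1-|u_2|^{p-2}u_2\bigr|$ and lets the Arcoya--Ruiz argument go through unchanged.
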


\begin{proof}
The proof is very similar to those of Proposition 2.6 in \cite{AR}, it is
sufficient to observe that that for all $a,b,c,d\in \mathbb{R}$ the
following inequality holds: 
\begin{equation}
\begin{array}{c}
|\max \{a,b\}-\max \{c,d\}|\leq \max \{\left\vert a-c\right\vert ,\left\vert
b-d\right\vert \},%
\end{array}
\label{DES}
\end{equation}%
which leads to 
\begin{equation*}
\begin{array}{l}
|\max \{(\underline{u}+\varepsilon )^{\alpha _{1}-1}R^{\beta
_{1}},\left\vert u_{1}\right\vert ^{p-2}u_{1}\}-\max \{(\underline{u}%
+\varepsilon )^{\alpha _{1}-1}R^{\beta _{1}},\left\vert u_{2}\right\vert
^{p-2}u_{2}\}| \\ 
\leq \left\vert \left\vert u_{1}\right\vert ^{p-2}u_{1}-\left\vert
u_{2}\right\vert ^{p-2}u_{2}\right\vert .%
\end{array}%
\end{equation*}%
The last inequality is a key point in the arguments found in \cite{AR}.
\end{proof}

\noindent \textbf{2. Compactness of $T_{p,\varepsilon }$.}

Let us consider the Dirichlet problem%
\begin{equation}
\left\{ 
\begin{array}{ll}
T_{p,\varepsilon }(u)={f(x)} & \text{in }\Omega , \\ 
u=0\text{ \ } & \text{on }\partial \Omega ,%
\end{array}%
\right.  \label{1***}
\end{equation}%
where $\Omega $ is a bounded domain in $%
\mathbb{R}
^{N},$ ${f\in }W^{-1,p^{\prime }}(\Omega )$ and $T_{p,\varepsilon
}:W_{0}^{1,p}(\Omega )\rightarrow W^{-1,p^{\prime }}(\Omega )$ is the
operator defined as follows:%
\begin{equation*}
\begin{array}{c}
T_{p,\varepsilon }(u)=-\Delta _{p}u+\rho \max \{(\underline{u}+\varepsilon
)^{\alpha _{1}-1}R^{\beta _{1}},\left\vert u\right\vert ^{p-2}u\}%
\end{array}%
\end{equation*}%
for all $\varepsilon \in (0,\varepsilon _{0})$.

A solution of (\ref{1***}) is understood in the weak sense, that is $u\in
W_{0}^{1,p}(\Omega )$ satisfying%
\begin{equation}
\begin{array}{c}
\int_{\Omega }\left( |\nabla u|^{p-2}\nabla u\nabla \varphi +\rho \max \{(%
\underline{u}+\varepsilon )^{\alpha _{1}-1}R^{\beta _{1}},\left\vert
u\right\vert ^{p-2}u\}\varphi \right) \ dx=\int_{\Omega }f\left( x\right)
\varphi \ dx%
\end{array}
\label{7***}
\end{equation}%
for all $\varphi \in W_{0}^{1,p}(\Omega )$.

\begin{lemma}
\label{L1}Problem (\ref{1***}) possesses a unique solution $u_{\varepsilon }$
in $W_{0}^{1,p}(\Omega )$ for all $\varepsilon \in (0,\varepsilon _{0}).$
Moreover, if $f \in L^{\infty}(\Omega)$ the solution $u_{\varepsilon }$
belongs to $C^{1,\gamma }(\Omega ), $ for certain $\gamma \in (0,1),$ and
satisfies%
\begin{equation}
\left\Vert u_{\varepsilon }\right\Vert _{C^{1,\gamma }}<\overline{R},
\label{2***}
\end{equation}%
where $\overline{R}$ is a positive constant, which depends of $\|f\|_\infty$.
\end{lemma}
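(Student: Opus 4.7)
The plan is to split into two stages: existence and uniqueness in the energy space via Minty--Browder, and then $L^{\infty}$ plus $C^{1,\gamma}$ regularity when $f$ is bounded. Writing $A_{\varepsilon}(x):=(\underline{u}(x)+\varepsilon)^{\alpha_{1}-1}R^{\beta_{1}}$, and observing that $\alpha_{1}-1<0$ together with $\underline{u}\geq 0$ force $0<A_{\varepsilon}(x)\leq \varepsilon^{\alpha_{1}-1}R^{\beta_{1}}$, I have $A_{\varepsilon}\in L^{\infty}(\Omega)$, and the operator decomposes as $T_{p,\varepsilon}=-\Delta_{p}+B_{\varepsilon}$ with $B_{\varepsilon}(u)=\rho\max\{A_{\varepsilon},|u|^{p-2}u\}$. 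The $L^{\infty}$-norm of $A_{\varepsilon}$ blows up as $\varepsilon\to 0$, which is harmless here since $\overline{R}$ is allowed to depend on $\varepsilon$.

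For the first stage I would verify the Minty--Browder hypotheses. The map $s\mapsto |s|^{p-2}s$ is continuous and strictly increasing, so $s\mapsto \max\{A_{\varepsilon}(x),|s|^{p-2}s\}$ is continuous and non-decreasing in $s$; hence $B_{\varepsilon}$ is monotone and, together with the strict monotonicity of $-\Delta_{p}$, this makes $T_{p,\varepsilon}$ strictly monotone. Continuity (in fact demicontinuity) of $B_{\varepsilon}:W_{0}^{1,p}(\Omega)\to W^{-1,p'}(\Omega)$ follows by dominated convergence using $|B_{\varepsilon}(u)|\leq \rho(A_{\varepsilon}+|u|^{p-1})$. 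For coercivity I observe the pointwise bound $\max\{A_{\varepsilon},|u|^{p-2}u\}\cdot u\geq A_{\varepsilon}u$ (clear on $\{u\geq 0\}$; on $\{u<0\}$ the max equals $A_{\varepsilon}$ because $|u|^{p-2}u<0<A_{\varepsilon}$), which yields
\[
\langle T_{p,\varepsilon}(u),u\rangle\geq \|\nabla u\|_{p}^{p}-\rho\|A_{\varepsilon}\|_{\infty}|\Omega|^{1/p'}\|u\|_{p},
\]
and Poincar\'e's inequality closes coercivity. Minty--Browder then gives a unique weak solution $u_{\varepsilon}\in W_{0}^{1,p}(\Omega)$ for every $f\in W^{-1,p'}(\Omega)$.

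For the regularity stage, assume $f\in L^{\infty}(\Omega)$. I would first establish an $L^{\infty}$ bound $\|u_{\varepsilon}\|_{\infty}\leq C(\|f\|_{\infty})$: for $k>\|A_{\varepsilon}\|_{\infty}^{1/(p-1)}$ the $\max$ collapses to $u_{\varepsilon}^{p-1}$ on $\{u_{\varepsilon}>k\}$, so testing with $(u_{\varepsilon}-k)_{+}^{\alpha}$ produces a damping $\rho\int u_{\varepsilon}^{p-1}(u_{\varepsilon}-k)_{+}^{\alpha}\geq \rho k^{p-1}\|(u_{\varepsilon}-k)_{+}\|_{\alpha}^{\alpha}$ which, once $k$ is so large that $\rho k^{p-1}>\|f\|_{\infty}$, forces $(u_{\varepsilon}-k)_{+}\equiv 0$; the lower bound $u_{\varepsilon}\geq -C$ is even simpler, since on $\{u_{\varepsilon}<0\}$ the $\max$ reduces to $A_{\varepsilon}$ and one compares with a constant sub-solution. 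With $u_{\varepsilon}\in L^{\infty}(\Omega)$ in hand, the equation reads
\[
-\Delta_{p}u_{\varepsilon}=f(x)-\rho\max\{A_{\varepsilon}(x),|u_{\varepsilon}|^{p-2}u_{\varepsilon}\}\quad\text{in }\Omega,
\]
with right-hand side in $L^{\infty}(\Omega)$ and norm controlled by $\|f\|_{\infty}$ (and by $\|A_{\varepsilon}\|_{\infty}$). The nonlinear regularity theory for the $p$-Laplacian with bounded source, invoked in the paper through \cite{Hai}, then delivers $u_{\varepsilon}\in C^{1,\gamma}(\overline{\Omega})$ with $\|u_{\varepsilon}\|_{C^{1,\gamma}}<\overline{R}$, where $\overline{R}=\overline{R}(\|f\|_{\infty})$. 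The only technical subtlety is the non-smoothness of the $\max$ term; however its monotonicity, the bound $|B_{\varepsilon}(u)|\leq \rho(A_{\varepsilon}+|u|^{p-1})$, and the pointwise inequality $|\max\{a,b\}-\max\{c,d\}|\leq \max\{|a-c|,|b-d|\}$ already exploited in Proposition \ref{P0} let every step reduce to the standard $p$-Laplace machinery.
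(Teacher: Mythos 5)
Your two-stage plan coincides with the paper's: verify that $T_{p,\varepsilon}$ is continuous, strictly monotone and coercive and apply Minty--Browder for existence and uniqueness, then prove an $L^{\infty}$ bound and invoke Lieberman-type nonlinear regularity to get the $C^{1,\gamma}$ estimate (\ref{2***}). Where you differ is in the two sub-steps. For coercivity you give an explicit argument based on the observation that $A_{\varepsilon}=(\underline{u}+\varepsilon)^{\alpha_{1}-1}R^{\beta_{1}}\in L^{\infty}(\Omega)$ for fixed $\varepsilon$ and the pointwise bound $\max\{A_{\varepsilon},|u|^{p-2}u\}\,u\geq A_{\varepsilon}u$, whereas the paper merely asserts that coercivity follows easily from that of $-\Delta_{p}$; your version is the more complete one. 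For the $L^{\infty}$ bound you use a Stampacchia/level-set truncation exploiting the absorption term $\rho|u|^{p-2}u$ above the level $k$, while the paper runs a Moser iteration with test functions $u_{\varepsilon,M}^{k_{1}p+1}$ and the Sobolev inequality, following \cite{MM}; your route is shorter and avoids the iteration, at the price of a bound that visibly depends on $\|A_{\varepsilon}\|_{\infty}$ (hence on $\varepsilon$), which, as you note, is harmless for this lemma. One small inaccuracy: your lower bound cannot be obtained by comparison with a constant subsolution, since for $w\equiv-k$ one has $T_{p,\varepsilon}(w)=\rho A_{\varepsilon}>0$, which need not lie below $f$ when $f$ is negative somewhere. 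Instead, observe that $\max\{A_{\varepsilon},|u|^{p-2}u\}\leq\|A_{\varepsilon}\|_{\infty}+(u^{+})^{p-1}$, so that, using the upper bound already proved, $-\Delta_{p}u\geq-C$ with $C=C(\|f\|_{\infty},\rho,\|A_{\varepsilon}\|_{\infty})$, and then compare $-u$ with the solution of $-\Delta_{p}z=C$, $z\in W_{0}^{1,p}(\Omega)$ (or run your level-set test on the negative side using this bound on the absorption term). With that one-line repair your argument is correct; the paper's own Moser step is in fact equally cavalier about the sign of $u_{\varepsilon}$.
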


\begin{proof}
To prove the lemma we apply Minty-Browder Theorem. To do so, we prove that
the operator $T_{p,\varepsilon }$ is continuous, strict monotone and
coercive for all $\varepsilon \in (0,\varepsilon _{0})$. Let us show that $%
T_{p,\varepsilon }$ is a continuous operator. For $\{u_{n}\}\subset
W_{0}^{1,p}(\Omega )$ with $u_{n}\rightarrow u$ in $W_{0}^{1,p}(\Omega )$,
we have 
\begin{equation*}
\begin{array}{l}
\left\Vert T_{p,\varepsilon }(u_{n})-T_{p,\varepsilon }(u)\right\Vert
_{W^{-1,p^{\prime }}(\Omega )}=\underset{\varphi \in W_{0}^{1,p}(\Omega
),\left\Vert \varphi \right\Vert _{1,p}\leq 1}{\sup }\left\vert \left\langle
T_{p,\varepsilon }(u_{n})-T_{p,\varepsilon }(u),\varphi \right\rangle
\right\vert \\ 
\\ 
\leq \int_{\Omega }\left\vert \left\langle \left( |\nabla u_{n}|^{p-2}\nabla
u_{n}-|\nabla u|^{p-2}\nabla u\right) ,\nabla \varphi \right\rangle
\right\vert \text{ }dx \\ 
\\ 
+\rho \int_{\Omega }\left\vert \max \{(\underline{u}+\varepsilon )^{\alpha
_{1}-1}R^{\beta _{1}},\left\vert u_{n}\right\vert ^{p-2}u_{n}\}-\max \{(%
\underline{u}+\varepsilon )^{\alpha _{1}-1}R^{\beta _{1}},\left\vert
u\right\vert ^{p-2}u\}\right\vert \left\vert \varphi \right\vert dx.%
\end{array}%
\end{equation*}%
Then if $p\geq 2$, using \cite[Lemma $5.3$]{GM} together with H\"{o}lder's
inequality and (\ref{DES}), we derive 
\begin{equation}
\begin{array}{l}
\left\Vert T_{p,\varepsilon }(u_{n})-T_{p,\varepsilon }(u)\right\Vert
_{W^{-1,p^{\prime }}(\Omega )}\leq c_{p}\left\Vert |\nabla u|+|\nabla
u|\right\Vert _{p}^{p^{\prime }(p-2)}\left\Vert u_{n}-u\right\Vert
_{1,p}^{p^{\prime }} \\ 
\\ 
+\rho \underset{\varphi \in W_{0}^{1,p}(\Omega ),\left\Vert \varphi
\right\Vert _{1,p}\leq 1}{\sup }\int_{\Omega }\left\vert \max \{0,\left\vert
u_{n}\right\vert ^{p-2}u_{n}-\left\vert u\right\vert ^{p-2}u\}\right\vert
\left\vert \varphi \right\vert dx \\ 
\\ 
\leq C(\left\Vert u_{n}\right\Vert _{1,p}+\left\Vert u\right\Vert
_{1,p})^{p^{\prime }(p-2)}\left\Vert u_{n}-u\right\Vert _{1,p}^{p^{\prime
}}+\rho \left\Vert \left\vert u_{n}\right\vert ^{p-2}u_{n}-\left\vert
u\right\vert ^{p-2}u\right\Vert _{p^{\prime }},%
\end{array}
\label{3***}
\end{equation}%
with some constant $C>0.$ If $1<p<2$ \cite[Lemma $5.4$]{GM} and H\"{o}lder's
inequality imply that%
\begin{equation}
\begin{array}{l}
\left\Vert T_{p,\varepsilon }(u_{n})-T_{p,\varepsilon }(u)\right\Vert
_{W^{-1,p^{\prime }}(\Omega )} \\ 
\leq c_{p}\left\Vert u_{n}-u\right\Vert _{1,p}+\rho \left\Vert \left\vert
u_{n}\right\vert ^{p-2}u_{n}-\left\vert u\right\vert ^{p-2}u\right\Vert
_{p^{\prime }}.%
\end{array}
\label{6***}
\end{equation}%
Consequently, the operator $L_{p,\varepsilon }$ is continuous for all $%
\varepsilon \in (0,\varepsilon _{0})$.

Now we claim that $L_{p,\varepsilon }$\textbf{\ }is strict monotone and
coercive. Indeed, let $u_{1},u_{2}\in W_{0}^{1,p}(\Omega )$. We note that
the integral 
\begin{equation}
\begin{array}{c}
\int_{\Omega }\left( \max \{(\underline{u}+\varepsilon )^{\alpha
_{1}}R^{\beta _{1}},\left\vert u_{1}\right\vert ^{p-2}u_{1}\}-\max \{(%
\underline{u}+\varepsilon )^{\alpha _{1}}R^{\beta _{1}},\left\vert
u_{2}\right\vert ^{p-2}u_{2}\}\right) (u_{1}-u_{2})dx%
\end{array}%
\end{equation}%
is positive because 
\begin{equation}
\left( \max \{(\underline{u}+\varepsilon )^{\alpha _{1}-1}R^{\beta
_{1}},\left\vert u_{1}\right\vert ^{p-2}u_{1}\}-\max \{(\underline{u}%
+\varepsilon )^{\alpha _{1}-1}R^{\beta _{1}},\left\vert u_{2}\right\vert
^{p-2}u_{2}\}\right) (u_{1}-u_{2})\geq 0\text{ \ in }\Omega .
\end{equation}%
Then for all $\varepsilon \in (0,\varepsilon _{0})$ we have%
\begin{equation*}
\begin{array}{l}
\left\langle T_{p,\varepsilon }(u_{1})-T_{p,\varepsilon
}(u_{2}),u_{1}-u_{2}\right\rangle =\int_{\Omega }\left\langle \left( |\nabla
u_{1}|^{p-2}\nabla u_{1}-|\nabla u_{2}|^{p-2}\nabla u_{2}\right) ,\nabla
(u_{1}-u_{2})\right\rangle \text{ }dx \\ 
\\ 
+\rho \int_{\Omega }\left( \max \{(\underline{u}+\varepsilon )^{\alpha
_{1}-1}R^{\beta _{1}},\left\vert u_{1}\right\vert ^{p-2}u_{1}\}-\max \{(%
\underline{u}+\varepsilon )^{\alpha _{1}-1}R^{\beta _{1}},\left\vert
u_{2}\right\vert ^{p-2}u_{2}\}\right) (u_{1}-u_{2})dx \\ 
\\ 
\geq \int_{\Omega }\left\langle \left( |\nabla u_{1}|^{p-2}\nabla
u_{1}-|\nabla u_{2}|^{p-2}\nabla u_{2}\right) ,\nabla
(u_{1}-u_{2})\right\rangle \text{ }dx%
\end{array}%
\end{equation*}%
and the claim follows due to the strict monotonicity of $-\Delta _{p}$ in $%
W_{0}^{1,p}(\Omega )$. The coercivity of the operator $T_{1,\varepsilon }$
can be proved easily using the coercivity of $-\Delta _{p}$. Now we are able
to apply the Minty-Browder theorem which guarantees the existence of a
unique solution for problem (\ref{1***}) in $W_{0}^{1,p}(\Omega )$.

Next we show that solutions $u_{\varepsilon }$ of (\ref{1***}) are in $%
C^{1,\gamma }(\overline{\Omega }),$ for certain $\gamma \in (0,1)$ for all $%
\varepsilon \in (0,\varepsilon _{0})$. The proof is based on Moser's
iterations technique combined with nonlinear regularity theory (see \cite{L}%
).

For $M>0$, define on $\Omega $ the function $u_{\varepsilon ,M}\left(
x\right) =\min \left( u_{\varepsilon }\left( x\right) ,M\right) .$ We act on
(\ref{7***}) with $\varphi =u_{\varepsilon ,M}^{k_{1}p+1}$ where 
\begin{equation}
\begin{array}{c}
\left( k_{1}+1\right) p=p^{\ast }%
\end{array}
\label{50***}
\end{equation}%
which gives 
\begin{equation}
\begin{array}{l}
\int_{\Omega }\left( \left( k_{1}p+1\right) \left\vert \nabla u_{\varepsilon
,M}\right\vert ^{p}u_{\varepsilon ,M}^{k_{1}p}+\rho \max \{(\underline{u}%
+\varepsilon )^{\alpha _{1}-1}R^{\beta _{1}},\left\vert u_{\varepsilon
}\right\vert ^{p-2}u_{\varepsilon }\}u_{\varepsilon ,M}^{k_{1}p+1}\right) 
\text{ }dx \\ 
=\int_{\Omega }f(x)u_{\varepsilon ,M}^{k_{1}p+1}\text{ }dx%
\end{array}
\label{36***}
\end{equation}%
By the Sobolev embedding theorem, the left-hand side of (\ref{36***}) is
estimated from below as follows 
\begin{equation}
\begin{array}{l}
\int_{\Omega }\left( \left( k_{1}p+1\right) \left\vert \nabla u_{\varepsilon
,M}\right\vert ^{p}u_{\varepsilon ,M}^{k_{1}p}+\rho \max \{(\underline{u}%
+\varepsilon )^{\alpha _{1}-1}R^{\beta _{1}},\left\vert u_{\varepsilon
}\right\vert ^{p-2}u_{\varepsilon }\}u_{\varepsilon ,M}^{k_{1}p+1}\right) 
\text{ }dx \\ 
\geq \int_{\Omega }\left( (k_{1}p+1)|\nabla u_{\varepsilon
,M}|^{p}u_{\varepsilon ,M}^{k_{1}p}+\rho \left\vert u_{\varepsilon
}\right\vert ^{p-2}u_{\varepsilon }\text{ }u_{\varepsilon
,M}^{k_{1}p+1}\right) \\ 
\geq \int_{\Omega }\left( (k_{1}p+1)|\nabla u_{\varepsilon
,M}|^{p}u_{\varepsilon ,M}^{k_{1}p}+\rho u_{\varepsilon
,M}^{(k_{1}+1)p}\right) \\ 
=\frac{k_{1}p+1}{\left( k_{1}+1\right) ^{p}}\int_{\Omega }\left\vert \nabla
u_{\varepsilon ,M}^{k_{1}+1}\right\vert ^{p}+\rho \left\Vert u_{\varepsilon
,M}\right\Vert _{p^{\ast }}^{p^{\ast }}\geq C_{1}\frac{\left(
k_{1}p+1\right) }{\left( k_{1}+1\right) ^{p}}\left\Vert u_{\varepsilon
,M}\right\Vert _{(k_{1}+1)p^{\ast }}^{p^{\ast }}%
\end{array}
\label{l6***}
\end{equation}%
where $C_{1}$ is some positive constant. From (\ref{50***})$,$ the
right-hand side of (\ref{36***}) is estimated from above by%
\begin{equation}
\begin{array}{l}
\int_{\Omega }f(x)u_{\varepsilon ,M}^{k_{1}p+1}\leq \left\Vert f\right\Vert
_{\infty }\int_{\Omega }u_{\varepsilon }^{k_{1}p+1}\leq \left\Vert
f\right\Vert _{\infty }\left\Vert u_{\varepsilon }\right\Vert _{p^{\ast
}}^{k_{1}p+1}.%
\end{array}
\label{8***}
\end{equation}%
Following the same arguments as in \cite{MM} we obtain that $u_{\varepsilon
}\in L^{\infty }(\Omega )$ for all $\varepsilon \in (0,\varepsilon _{0})$.
Then from the nonlinear regularity theory (see \cite{L}) we infer that $%
u_{\varepsilon }\in C^{1,\gamma }(\overline{\Omega }),$ for certain $\gamma
\in (0,1)$ and $\left\Vert u_{\varepsilon }\right\Vert _{C^{1,\gamma }}<%
\overline{R}$ for a large constant $\overline{R}>0$ and for all $\varepsilon
\in (0,\varepsilon _{0})$.
\end{proof}

Lemma \ref{L1} ensures that the inverse operator 
\begin{equation*}
T_{p,\varepsilon }^{-1}:C(\overline{\Omega })\rightarrow C^{1}(\overline{%
\Omega })
\end{equation*}%
is well defined for all $\varepsilon \in (0,\varepsilon _{0})$. The next
proposition gives some properties regarding $T_{p,\varepsilon }^{-1}.$

\begin{proposition}
\label{P4}The operator $T_{p,\varepsilon }^{-1}$ is continuous and compact
for all $\varepsilon \in (0,\varepsilon _{0})$.
\end{proposition}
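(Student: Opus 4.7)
The plan is to deduce both continuity and compactness of $T_{p,\varepsilon}^{-1}\colon C(\overline{\Omega})\to C^{1}(\overline{\Omega})$ from the a priori estimate already proved in Lemma \ref{L1}. The key observation is that for every $\varepsilon\in(0,\varepsilon_{0})$ the ``obstacle'' $\rho\max\{(\underline{u}+\varepsilon)^{\alpha_{1}-1}R^{\beta_{1}},|u|^{p-2}u\}$ is a bounded and continuous function of $u$ in $L^{\infty}(\Omega)$: since $\underline{u}\geq 0$ one has $\underline{u}+\varepsilon\geq \varepsilon$, so $(\underline{u}+\varepsilon)^{\alpha_{1}-1}R^{\beta_{1}}$ is pointwise bounded on $\overline{\Omega}$, and the inequality \eqref{DES} from Proposition \ref{P0} gives the Lipschitz type continuity of the max in $u$.

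\emph{Compactness.} Let $\{f_{n}\}\subset C(\overline{\Omega})$ be bounded and set $u_{n}=T_{p,\varepsilon}^{-1}(f_{n})$. Since $\|f_{n}\|_{\infty}\leq M$ for some $M>0$, Lemma \ref{L1} (whose $C^{1,\gamma}$ bound $\overline{R}$ depends only on $\|f\|_{\infty}$) yields a constant $\overline{R}=\overline{R}(M)$ such that $\|u_{n}\|_{C^{1,\gamma}(\overline{\Omega})}\leq \overline{R}$ uniformly in $n$. The compact embedding $C^{1,\gamma}(\overline{\Omega})\hookrightarrow C^{1}(\overline{\Omega})$ (i.e.\ Arzel\`a--Ascoli applied to $\{u_{n}\}$ and $\{\nabla u_{n}\}$) then produces a subsequence that converges in $C^{1}(\overline{\Omega})$, proving compactness.

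\emph{Continuity.} Take $f_{n}\to f$ in $C(\overline{\Omega})$ and set $u_{n}=T_{p,\varepsilon}^{-1}(f_{n})$, $u=T_{p,\varepsilon}^{-1}(f)$. By the previous paragraph, $\{u_{n}\}$ is relatively compact in $C^{1}(\overline{\Omega})$; it therefore suffices to show that the only possible limit point (along any subsequence) is $u$. If $u_{n_{k}}\to \tilde{u}$ in $C^{1}(\overline{\Omega})$, then $|\nabla u_{n_{k}}|^{p-2}\nabla u_{n_{k}}\to |\nabla \tilde{u}|^{p-2}\nabla \tilde{u}$ uniformly, and by \eqref{DES} the obstacle term converges uniformly to $\rho\max\{(\underline{u}+\varepsilon)^{\alpha_{1}-1}R^{\beta_{1}},|\tilde{u}|^{p-2}\tilde{u}\}$. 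Passing to the limit in the weak formulation \eqref{7***} with test function $\varphi\in W_{0}^{1,p}(\Omega)$ gives $T_{p,\varepsilon}(\tilde{u})=f$, and the uniqueness part of Lemma \ref{L1} forces $\tilde{u}=u$. A standard subsequence argument then gives $u_{n}\to u$ in $C^{1}(\overline{\Omega})$, which is the continuity of $T_{p,\varepsilon}^{-1}$.

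The only delicate point is handling the $\max$ nonlinearity when passing to the limit; inequality \eqref{DES} reduces it to the uniform convergence of $|u_{n_{k}}|^{p-2}u_{n_{k}}$, which is automatic from $C^{1}$-convergence. Everything else is a direct application of Lemma \ref{L1} combined with Arzel\`a--Ascoli.
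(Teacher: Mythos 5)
Your proposal is correct, and the compactness half is exactly the paper's argument: the $C^{1,\gamma}$ a priori bound of Lemma \ref{L1} (uniform over bounded sets of data, since $\overline{R}$ depends only on $\|f\|_{\infty}$) combined with the compact embedding $C^{1,\gamma}(\overline{\Omega})\hookrightarrow C^{1}(\overline{\Omega})$. For continuity the two routes diverge slightly. The paper first extracts a weakly convergent subsequence in $W_{0}^{1,p}(\Omega)$, tests \eqref{4***} with $\varphi=u_{n}-u$, and invokes the $S_{+}$ property of $-\Delta_{p}$ to upgrade to strong $W_{0}^{1,p}$ convergence, and only then uses the $C^{1,\gamma}$ bound and compact embedding to pass to $C^{1}$ convergence and identify the limit. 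You bypass the $S_{+}$ step entirely: relative compactness in $C^{1}(\overline{\Omega})$ already gives uniform convergence of $\nabla u_{n_{k}}$, hence of $|\nabla u_{n_{k}}|^{p-2}\nabla u_{n_{k}}$ and (via \eqref{DES}) of the truncated zero-order term, so you can pass to the limit in the weak formulation directly and then invoke the uniqueness statement of Lemma \ref{L1} to conclude that every $C^{1}$ limit point equals $T_{p,\varepsilon}^{-1}(f)$. This is a legitimate simplification: once the $C^{1,\gamma}$ estimate is available, the monotone-operator machinery is redundant, whereas the paper's $S_{+}$ argument would still give strong $W_{0}^{1,p}$ convergence in situations where only energy bounds are at hand. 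The one point worth making explicit in your write-up is that the limit $\tilde{u}$ belongs to $W_{0}^{1,p}(\Omega)$ (it vanishes on $\partial\Omega$ as a uniform limit of functions that do), so that the uniqueness assertion of Lemma \ref{L1} applies; this is routine and does not affect correctness.
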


\begin{proof}
First, let us show that $T_{p,\varepsilon }^{-1}$ is a continuous operator.
So let $f_{n}\rightarrow f$ in $C(\overline{\Omega })$. Denoting $%
u_{n}=T_{p,\varepsilon }^{-1}(f_{n})$ reads as%
\begin{equation}
\begin{array}{c}
\int_{\Omega }\left( |\nabla u_{n}|^{p-2}\nabla u_{n}\nabla \varphi +\rho
\max \{(\underline{u}+\varepsilon )^{\alpha _{1}-1}R^{\beta _{1}},\left\vert
u_{n}\right\vert ^{p-2}u_{n}\}\varphi \right) \ dx=\int_{\Omega }f_{n}\left(
x\right) \varphi \ dx%
\end{array}
\label{4***}
\end{equation}%
for all $\varphi \in W_{0}^{1,p}(\Omega )$. Since by (\ref{2***}) the
sequence $\{u_{n}\}$ is bounded in $W_{0}^{1,p}(\Omega )$, along a relabeled
subsequence there holds 
\begin{equation}
u_{n}\rightharpoonup u\text{ with some }u\in W_{0}^{1,p}(\Omega ).
\label{5***}
\end{equation}%
Setting $\varphi =u_{n}-u$ in (\ref{4***}). Then Lebesgue's dominated
convergence theorem ensures%
\begin{equation*}
\begin{array}{c}
\underset{n\rightarrow \infty }{\lim }\left\langle -\Delta
_{p}u_{n},u_{n}-u\right\rangle =0.%
\end{array}%
\end{equation*}%
The $S_{+}$ property of $-\Delta _{p}$ on $W_{0}^{1,p}(\Omega )$ along with (%
\ref{5***}) implies $u_{n}\rightarrow u$ in $W_{0}^{1,p}(\Omega )$.
Furthermore, the boundedness of the sequence $\{u_{n}\}$ in $C^{1,\gamma }(%
\overline{\Omega })$ and since the embedding $C^{1,\gamma }(\overline{\Omega 
})\subset C^{1}(\overline{\Omega })$ is compact, it turns out that along a
relabeled subsequence, one has the fact that $u_{n}\rightarrow u$ in $C^{1}(%
\overline{\Omega })$. Finally, (\ref{4***}) result in $u=T_{p,\varepsilon
}^{-1}(f)$, proving that $T_{p,\varepsilon }^{-1}$ is continuous operator.

Next, we show that $T_{p,\varepsilon }^{-1}(C(\overline{\Omega }))$ is a
relatively compact subset of $C^{1}(\overline{\Omega })$. Let $%
u_{n}=T_{p,\varepsilon }^{-1}(f_{n})$ with $f_{n}\in C(\overline{\Omega })$
for all $n$. Following the same reasoning as before, we find $u\in C^{1}(%
\overline{\Omega })$ such that, along a relabeled subsequence, $%
u_{n}\rightarrow u$ in $C^{1}(\overline{\Omega })$, thereby the relative
compactness of $T_{p,\varepsilon }^{-1}$ is proven.
\end{proof}

\begin{acknowledgement}
The work was accomplished while the second author was visiting the
University Federal of Campina Grande with CNPq-Brazil fellowship N$%
{{}^\circ}%
$ 402792/2015-7. He thanks for hospitality.
\end{acknowledgement}

\end{document}